\theoremstyle{plain}
\newtheorem{theorem}{Theorem}
\newtheorem{proposition}[theorem]{Proposition}
\newtheorem{lemma}[theorem]{Lemma}
\theoremstyle{definition}
\newtheorem{definition}[theorem]{Definition}
\newtheorem{example}[theorem]{Example}
\newcommand{\cat}[1]{\ensuremath{\mathbf{#1}}}
\newcommand{\id}[1][]{\ensuremath{\mathrm{id}_{#1}}}
\newcommand{\op}{\ensuremath{^\textrm{\rm op}}}
\DeclareMathOperator{\Tr}{Tr}
\DeclareMathOperator{\tr}{tr}
\DeclareMathOperator{\dom}{dom}
\newcommand{\To}{\ensuremath{\Rightarrow}} 
\newcommand{\TO}{\ensuremath{\Rrightarrow}} 
\tikzstyle{twocell}=[-implies,double equal sign distance]
\tikzstyle{threecell}=[draw,shorten <=.5pt,shorten >=.5pt,-implies,preaction={draw,double distance=2.5pt,-implies}]
\begin{document}
\title{Compact inverse categories}
\author{Robin Cockett}
\address{University of Calgary, 2500 University Drive NW, Calgary AB T2N 1N4, Canada}
\email{robin@ucalgary.ca}
\author{Chris Heunen}
\address{University of Edinburgh, Informatics Forum, Edinburgh EH8 9AB, United Kingdom}
\email{chris.heunen@ed.ac.uk}
\thanks{Supported by EPSRC Fellowship EP/R044759/1. We thank Peter Hines for pointing out that the proof of Proposition~\ref{prop:oneobject} could be simplified, Martti Karvonen for the idea of the proof of Lemma~\ref{lem:samediff}, and Phil Scott for pointing out Theorem~\ref{thm:jarek}.}
\date{\today}
\begin{abstract}
  The Ehresmann-Schein-Nambooripad theorem gives a structure theorem for inverse monoids: they are inductive groupoids.
  A particularly nice case due to Jarek is that commutative inverse monoids become semilattices of abelian groups.
  It has also been categorified by DeWolf-Pronk to a structure theorem for inverse categories as locally complete inductive groupoids.
  We show that in the case of compact inverse categories, this takes the particularly nice form of a semilattice of compact groupoids. 
  Moreover, one-object compact inverse categories are exactly commutative inverse monoids.
  Compact groupoids, in turn, are determined in particularly simple terms of 3-cocycles by Baez-Lauda.
\end{abstract}
\maketitle

\section{Introduction}

Inverse monoids model partial symmetry~\cite{lawson:inversesemigroups}, and arise naturally in many combinatorial constructions~\cite{duncanpaterson:cstaralgebras}.
The easiest example of an inverse monoid is perhaps a group. 
There is a structure theorem for inverse monoids, due to Ehresmann-Schein-Nambooripad~\cite{ehresmann:gattungen,ehresmann:inductives,schein:inversesemigroups,nambooripad:regularsemigroups}, that exhibits them as inductive groupoids.
The latter are groupoids internal to the category of partially ordered sets with certain extra requirements.
By a result of Jarek~\cite{jarek:semigroups}, the inductive groupoids corresponding to commutative inverse monoids can equivalently be described as semilattices of abelian groups.

A natural typed version of an inverse monoid is an inverse category~\cite{kastl:inversecategories,cockettlack:restriction}. This notion can for example model partial reversible functional programs~\cite{giles:thesis}.
The easiest example of an inverse category is perhaps a groupoid.
DeWolf-Pronk have generalised the ESN theorem to inverse categories, exhibiting them as locally complete inductive groupoids.
This paper investigates `the commutative case', thus fitting in the bottom right cell of Figure~\ref{fig:overview}.

\begin{figure}[h]\label{fig:overview}
  \begin{tabular}{lll}
    \toprule
    objects & general case & commutative case \\
    \midrule 
    one & inductive groupoid~\cite{nambooripad:regularsemigroups} & semilattice of abelian groups~\cite{jarek:semigroups} \\
    many & locally inductive groupoid~\cite{dewolfpronk:esntheorem} & semilattice of compact groupoids \\
    \bottomrule
  \end{tabular}
  \caption{Overview of structure theorems for inverse categories.}
\end{figure}

However, let us emphasise two ways in which Figure~\ref{fig:overview} is overly simplified.
First, the term `commutative case' is misleading: we mean considering compact inverse categories. 
More precisely, we prove that compact inverse categories correspond to semilattices of compact groupoids. 
Compact inverse categories are only commutative in that their endohomset of scalars is always commutative. 
In particular, the categorical composition of the compact inverse category can be as noncommutative as you like.
We expect that the tensor product also need not be symmetric.
But compact categories are interesting in their own right: 
they model quantum entanglement~\cite{heunenvicary:cqm}; they model linear logic~\cite{seely:linearlogic}; and they naturally extend traced monoidal categories modelling feedback~\cite{joyalstreetverity:traced}.

Second, our result is not a straightforward special case of DeWolf-Pronk~\cite{dewolfpronk:esntheorem}, nor of Jarek~\cite{jarek:semigroups}, but instead rather a common categorification.
We prove that one-object compact inverse categories are exactly commutative inverse monoids.
Semilattices of groupoids are a purely categorical notion, whereas ordered groupoids have more ad hoc aspects. 
Compact groupoids are also known as 2-groups or crossed modules, and have fairly rigid structure themselves, due to work by Baez and Lauda~\cite{baezlauda:2groups}.
We take advantage of this fact to ultimately show that there is a (weak) 2-equivalence of (weak) 2-categories of compact inverse categories, and semilattices of 3-cocycles.

Section~\ref{sec:inversemonoids} starts by recalling the ESN structure theorem for inverse monoids, and its special commutative case due to Jarek in a language that the rest of the paper will follow. 
Section~\ref{sec:inversecategories} discusses the generalisation of the ESN theorem to inverse categories due to DeWolf and Pronk, and its relation to semilattices of groupoids.
Section~\ref{sec:compactinversecategories} is the heart of the paper, and considers additional structure on inverse categories that was hidden for inverse monoids. It shows that the construction works for compact inverse categories, and argues that this is the right generalisation of inverse monoids in this sense. 
After all this theory, Section~\ref{sec:examples} lists examples. We have chosen to treat examples after theory; that way they can illustrate not just compact inverse categories, but also the construction of the structure theorem itself. 
Section~\ref{sec:compactgroupoids} then moves to a 2-categorical perspective, to connect to the structure theorem for compact groupoids due to Baez and Lauda.
Finally, Section~\ref{sec:conclusion} discusses the many questions left open and raised in the paper.

\section{Inverse monoids}\label{sec:inversemonoids}

An \emph{inverse monoid} is a monoid where every element $x$ has a unique element $x^\dag$ satisfying $x=xx^\dag x$ and $x^\dag=x^\dag x x^\dag$~\cite{lawson:inversesemigroups}. Equivalently, the monoid carries an involution $\dag$ such that $x=xx^\dag x$ and $xx^\dag yy^\dag=yy^\dag y xx^\dag$ for all elements $x$ and $y$. Inverse monoids and involution-respecting homomorphisms form a category $\cat{InvMon}$, and commutative inverse monoids form a full subcategory $\cat{cInvMon}$. This section recalls structure theorems for inverse monoids. In general they correspond to inductive groupoids by the Ehresmann-Schein-Nambooripad theorem~\cite{ehresmann:gattungen,ehresmann:inductives,nambooripad:regularsemigroups,schein:inversesemigroups}, that we now recall. 

\begin{definition}\label{def:semilattice}
  A \emph{(bounded meet-)semilattice} is a partially ordered set with a greatest element $\top$, in which any two elements $s$ and $t$ have a greatest lower bound $s \wedge t$. A \emph{morphism of semilattices} is a function $f$ satisfying $f(\top)=\top$ and $f(s \wedge t) = f(s) \wedge f(t)$.
\end{definition}

We regard a semilattice as a category by letting elements be objects and having a unique morphism $s \to t$ when $s \leq t$, that is, when $s \wedge t = s$.
We will disregard size issues altogether; either by restricting to small categories throughout the article, or by allowing semilattices (and monoids) that are large -- the only place it seems to matter is Lemma~\ref{lem:samediff} below.
Recall that a \emph{groupoid} is a category whose every morphism is invertible.

\begin{definition}\label{def:orderedgroupoid}
  An \emph{ordered groupoid} is a groupoid internal to the category of partially ordered sets and monotone functions, together with a choice of \emph{restriction} $(f|A) \colon A \to B$ for each $f \colon A' \to B$ and $A \leq A'$ satisfying $(f|A)\leq f$. Explicitly, the sets $G_0$ and $G_1$ of objects and arrows are partially ordered, and the functions
  \[\begin{tikzpicture}[xscale=2.5,font=\small]
    \node (0) at (0,0){$G_0$};
    \node (1) at (1,0){$G_1$};
    \node (2) at (2,0){$G_2$};
    \draw[->] (0) to node[above=-.5mm]{id} (1);
    \draw[->] (1) to[out=-135,in=-45] node[below=-1mm]{cod} (0);
    \draw[->] (1) to[out=135,in=45] node[above]{dom} (0);
    \draw[->] (2) to node[above]{comp} (1);
    \draw[->] (1) to[out=60,in=0] +(0,.7) node[above]{inv} to[out=180,in=120] (1);
  \end{tikzpicture}\]
  are all monotone, where $G_2=\{(g,f) \in G_1^2 \mid \mathrm{dom}(g)=\mathrm{cod}(f) \}$ is ordered by $(g,f) \leq (g',f')$ when $g \leq g'$ and $f \leq f'$.
  An \emph{inductive groupoid} is an ordered groupoid whose partially ordered set of objects forms a semilattice.

  A morphism of ordered groupoids is a functor $F$ that is monotone in morphisms, that is, $F(f)\leq F(g)$ when $f \leq g$.
  Inductive groupoids and their morphisms form a category $\cat{IndGpd}$.
\end{definition}

\begin{theorem}\label{thm:esn}
  There is an equivalence $\cat{InvMon} \simeq \cat{IndGpd}$.
\end{theorem}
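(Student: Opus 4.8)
The plan is to construct functors in both directions between $\cat{InvMon}$ and $\cat{IndGpd}$ and show they are mutually inverse up to natural isomorphism. First I would build the functor $\cat{InvMon} \to \cat{IndGpd}$. Given an inverse monoid $M$, the idempotents $E(M) = \{e \mid e = e^2\}$ form the object set; in an inverse monoid these are exactly the elements of the form $xx^\dag$, they all commute, and they form a semilattice under $e \wedge f = ef$ with $\top = 1$. The arrows of the groupoid are the elements of $M$ themselves, with $\dom(x) = x^\dag x$ and $\cod(x) = x x^\dag$, identities given by idempotents, and composition being the restriction of the monoid multiplication to those pairs $(y,x)$ where $\dom(y) = \cod(x)$. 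The inverse of the groupoid is $x \mapsto x^\dag$; the defining equation $x = xx^\dag x$ is exactly what makes $x^\dag$ a two-sided groupoid inverse relative to the correct identities. The order on arrows is the natural partial order $x \leq y \iff x = yy^\dag x \iff x = e y$ for some idempotent $e$, and the restriction of $f$ along $A \leq A'$ is $A f$ (or $fA$, depending on side conventions). I would check that all the structure maps are monotone and that this assembles into an inductive groupoid, and that an involution-preserving homomorphism of monoids induces a morphism of inductive groupoids.

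Second I would build the reverse functor $\cat{IndGpd} \to \cat{InvMon}$. Given an inductive groupoid $G$, the underlying set of the monoid is the set of all arrows $G_1$. The subtle point is defining the multiplication: two arrows $f \colon A \to B$ and $g \colon C \to D$ need not be composable in the groupoid, so I would define their product using the \emph{pseudoproduct}, restricting each along the meet of the relevant objects. Concretely, set $D = \dom(f) \wedge \cod(g)$ (the meet exists because $G_0$ is a semilattice), restrict $f$ and $g$ to arrows ending/starting at $D$ using the restriction operation, and compose the results in the groupoid. The involution is the groupoid inverse $\mathrm{inv}$. I would then verify the inverse-monoid axioms $x = xx^\dag x$ and the commuting-idempotents identity, where the idempotents turn out to correspond exactly to the objects $A \in G_0$ (viewed as identity arrows), so that their product agrees with the semilattice meet.

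The main obstacle will be verifying that the pseudoproduct is associative and genuinely unital, since this is where the interaction between the groupoid composition, the restriction operation, and the semilattice meet all come together. The monotonicity axioms and the restriction law $(f|A) \leq f$ are precisely the hypotheses needed to make the restrictions behave coherently under the meet, but checking associativity requires a careful case analysis comparing $(xy)z$ and $x(yz)$ via the relevant meets of domains and codomains, using that meets are associative and that restriction commutes appropriately with composition. Once associativity and unitality are in hand, the semilattice structure guarantees commuting idempotents essentially for free.

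Finally I would show the two functors are mutually inverse up to natural isomorphism. Starting from an inverse monoid $M$, forming its inductive groupoid and then taking all arrows recovers $M$ as a set, and one checks the pseudoproduct reduces to the original monoid multiplication (using $xy = (x \cdot x^\dag x \wedge y y^\dag)$-style identities), recovering $M$ on the nose or up to canonical isomorphism. Conversely, starting from an inductive groupoid $G$, the idempotents of the associated monoid are its objects and the natural partial order on arrows coincides with the given order, so $G$ is recovered up to isomorphism. These natural isomorphisms are then straightforward to assemble, completing the equivalence. I expect the bookkeeping of the restriction conventions (left versus right, and which object one restricts to) to be the most error-prone part, though not conceptually deep.
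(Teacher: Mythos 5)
Your proposal follows essentially the same route as the paper's proof sketch (which itself defers the details to Lawson and DeWolf--Pronk): idempotents as objects, elements as arrows $x \colon x^\dag x \to xx^\dag$ with composition by multiplication, the natural partial order with restriction $xs$, and, for the converse direction, the pseudoproduct built from meets and restrictions, whose associativity you rightly flag as the main verification. The one detail to watch is that the restriction of Definition~\ref{def:orderedgroupoid} only lowers domains, so ``restricting $g$ to end at'' the meet requires the corestriction $(g^{-1}|D)^{-1}$ (using monotonicity of inversion), a standard point that your ``side conventions'' caveat already gestures at.
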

\begin{proof}[Proof sketch]
  See~\cite[Section~4.2]{lawson:inversesemigroups} or~\cite{dewolfpronk:esntheorem} for details.
  An inverse monoid $M$ turns into an inductive groupoid as follows.
  Objects are idempotents $ss^\dag=s \in M$.
  Every element of $M$ is a morphism $x \colon x^\dag x \to xx^\dag$. 
  The identity on $s$ is $s$ itself, and composition is given by multiplication in $M$.
  Inverses are given by $x^{-1}=x^\dag$.
  The order $x \leq y$ holds when $x=yx^\dag x$.
  The restriction of $x \colon x^\dag x \to xx^\dag$ to $s^\dag s = s \leq x^\dag x$ is $xs$.
\end{proof}

Observe from the proof of the previous theorem that commutative inverse monoids correspond to inductive groupoids where every morphism is an endomorphism. Moreover, the endohomsets are abelian groups. Hence commutative inverse monoids correspond to a semilattice of abelian groups.

\begin{definition}\label{def:slat}
  A \emph{semilattice over} a subcategory $\cat{V}$ of $\cat{Cat}$ is a functor $F \colon \cat{S}\op \to \cat{V}$ where $\cat{S}$ is a semilattice and all categories $F(s)$ have the same objects.
  A \emph{morphism of semilattices $F \to F'$ over $\cat{V}$} is a morphism of semilattices $\varphi \colon \cat{S} \to \cat{S'}$ together with a natural transformation $\theta \colon F \Rightarrow F' \circ \varphi$.
  Write $\cat{SLat}[\cat{V}]$ for the category of semilattices over $\cat{V}$ and their morphisms.
\end{definition}

The ordinary category of semilattices can be recovered by choosing $\cat{V}$ to be the category containing as its single object the terminal category $\cat{1}$. 
In the commutative case, the ESN theorem simplifies, as worked out by Jarek~\cite{jarek:semigroups}. The following formulation chooses $\cat{V}=\cat{Ab}$, regarding an abelian group as a one-object category.

\begin{theorem}\label{thm:jarek}
  If $M$ is a commutative inverse monoid, then
  \[
    \cat{S} = \{ s \in M \mid ss^\dag=s \}, \qquad s \wedge t = st, \qquad \top=1,
  \] 
  is a semilattice, and for each $s \in \cat{S}$, 
  \[
    F(s) = \{ x \in M \mid xx^\dag=s \}
  \]
  is an abelian group with multiplication inherited from $M$ and unit $s$,
  giving a semilattice of abelian groups $F \colon \cat{S} \to \cat{Ab}$ by $F(s \leq t)(x) \to sx$.

  If $F \colon \cat{S} \to \cat{Ab}$ is a semilattice of abelian groups, then 
  $M = \coprod_{s \in \cat{S}} F(s)$ is a commutative inverse monoid under
  \begin{align*}
    xy & = F(s \wedge t \leq s)(x) \cdot F(s \wedge t \leq t)(y) && \text{ if }x \in F(s),\ y \in F(t), \\
    x^\dag & = x^{-1} \in F(s) && \text{ if }x \in F(s), \\
    1 & = 1 \in F(\top).
  \end{align*}
  This gives an equivalence $\cat{cInvMon} \simeq \cat{SLat}[\cat{Ab}]$.  
\end{theorem}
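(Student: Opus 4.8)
The plan is to verify that the two stated constructions are well-defined, functorial, and mutually inverse up to natural isomorphism, leaning on Theorem~\ref{thm:esn} and the observation preceding the statement that commutative inverse monoids are exactly those inductive groupoids all of whose morphisms are endomorphisms with abelian endohomsets. First I would check that for a commutative inverse monoid $M$ the idempotents form a semilattice: idempotents in any inverse monoid commute and are closed under multiplication, here $s \wedge t = st$ with $\top = 1$, and the order $s \leq t \iff st = s$ recovers $\wedge$. Then for each idempotent $s$ I would confirm that $F(s) = \{x \mid xx^\dag = s\}$ is an abelian group: commutativity forces $x^\dag x = xx^\dag$, so $F(s)$ is closed under multiplication (using $(xy)(xy)^\dag = xx^\dag yy^\dag = s$), has unit $s$ (since $sx = xx^\dag x = x$), and inverse $x^\dag$ (since $x^\dag (x^\dag)^\dag = x^\dag x = s$). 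Finally, for $s \leq t$ the map $x \mapsto sx$ sends $F(t)$ into $F(s)$ and is a group homomorphism, with functoriality following from idempotence and commutativity; this yields the functor $F$, which is contravariant, i.e.\ defined on $\cat{S}\op$ as in Definition~\ref{def:slat}.

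Second, for a semilattice of abelian groups $F$, I would show that $M = \coprod_s F(s)$ with the displayed operations is a commutative inverse monoid. Well-definedness places $xy$ in $F(s \wedge t)$; the unit law holds because restricting $1 \in F(\top)$ along $s \leq \top$ gives the unit of $F(s)$ (homomorphisms preserve units); and since $x^\dag = x^{-1}$ lies in $F(s)$ with $xx^\dag$ equal to the unit of $F(s)$, we get $xx^\dag x = x$. Commutativity and the remaining inverse-monoid axiom are then immediate from the symmetry of $\wedge$ and the abelianness of each $F(s \wedge t)$.

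Third, I would promote both assignments to functors and exhibit the natural isomorphisms witnessing the equivalence. A homomorphism $f \colon M \to M'$ restricts to a semilattice map $\varphi$ on idempotents and to group homomorphisms $F(s) \to F'(\varphi(s))$ assembling into the natural transformation $\theta$; conversely a morphism $(\varphi, \theta)$ reassembles into a homomorphism on the coproducts. The unit of the equivalence is the bijection $M \cong \coprod_s F(s)$ sorting each $x$ into $F(xx^\dag)$, and the counit recovers $F$ from the fibres of the reconstructed monoid; checking that these are isomorphisms of inverse monoids and of semilattices of abelian groups, and that they are natural, is routine bookkeeping.

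The hard part will be the associativity of the reconstructed multiplication: expanding $(xy)z$ and $x(yz)$ for $x \in F(s)$, $y \in F(t)$, $z \in F(u)$ requires collapsing nested restrictions via functoriality of $F$ --- for instance $F(s \wedge t \wedge u \leq s \wedge t) \circ F(s \wedge t \leq s) = F(s \wedge t \wedge u \leq s)$ --- so that both sides become the same symmetric triple product inside the abelian group $F(s \wedge t \wedge u)$. Once this is set up, associativity reduces to associativity of $\wedge$ and of the group operation, and the remaining verifications fall into place.
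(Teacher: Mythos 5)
Your proposal is correct and follows essentially the same route as the paper's own proof: a direct verification that $\cat{S}$ is a commutative idempotent monoid, that each $F(s)$ is an abelian group with unit $s$ and inverse $x^\dag$ (via the same computations $(xy)(xy)^\dag = xx^\dag yy^\dag = s$ and $sx = xx^\dag x = x$), that $x \mapsto sx$ gives well-defined functorial restriction maps, and that the two constructions assemble into functors that are mutually inverse up to the evident identifications. The only differences are cosmetic: you flag associativity of the reconstructed multiplication as the delicate point (the paper dismisses it as clear) and you correctly note the contravariance $F \colon \cat{S}\op \to \cat{Ab}$, which the paper's statement glosses over.
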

\begin{proof}
  First, let $M$ be an inverse monoid.
  To see that $\cat{S}$ is a semilattice, it suffices to show that it is a commutative idempotent monoid. 
  Commutativity is inherited from $M$, and idempotence follows from the fact that $M$ is an inverse monoid: $(xx^\dag)^2=xx^\dag xx^\dag=xx^\dag$.
  Next we verify that each $F(s)$ is an abelian group. 
  It is closed under multiplication: if $x,y \in F(s)$, then $(xy)(xy)^\dag = xx^\dag y^\dag y = ss^\dag=s$ so also $xy \in F(s)$.
  It has $s$ as a unit: if $x \in F(s)$, then $sx=xx^\dag x=x$.
  The inverse of $x \in F(s)$ is given by $x^\dag$, because $xx^\dag=s$ by definition.
  Furthermore, the diagram $F$ is functorial: clearly $F(s \leq t) \circ F(r \leq s) (x)=Rx=F(r \leq t)(x)$, and $F(s \leq s)(x)=sx=xx^\dag x=x$. It is also well-defined: if $s \leq t$ and $x \in F(t)$, then $sx(sx)^\dag=sxx^\dag s^\dag = sts^\dag = ss^\dag=s$ so $sx \in F(t)$. 

  Now let $F \in \cat{SLat}[\cat{Ab}]$. Then $1 \in F(\top)$ acts as a unit in $M$: if $x \in F(s)$ then $x 1 = F(s \leq s)(x) \cdot F(s \leq \top)(1) = x \cdot 1 = x \in F(s)$. The multiplication is clearly associative and commutative, so $M$ is an abelian monoid. It is an inverse monoid because $xx^\dag x = xx^{-1}x = x$ is computed within $F(s)$.

  Next we move to morphisms. Given a morphism $f \colon M \to M'$ of commutative inverse monoids, define a morphism $F \to F'$ of their associated semilattices of abelian groups as follows: $\varphi \colon \cat{S} \to \cat{S'}$ is just $\varphi(s)=f(s)$, and $\theta_s \colon F(s) \to F'(f(s))$ is just $\theta_s(x)=f(x)$. This is clearly functorial $\cat{cInvMon} \to \cat{SLat}[\cat{Ab}]$.

  Conversely, given a morphism $(\varphi,\theta) \colon F \to F'$ of semilattices of abelian groups, define a homomorphism $M \to M'$ of their associated commutative inverse monoids by $F(s) \ni x \mapsto \theta_s(x) \in F(\varphi(s))$. This is clearly functorial $\cat{SLat}[\cat{Ab}] \to \cat{cInvMon}$.

  Finally, turning a commutative inverse monoid $M$ into a semilattice of abelian groups and that in turn into a commutative inverse monoid ends up with the exact same monoid $M$.
  A semilattice of abelian groups $F \colon \cat{S} \to \cat{Ab}$ gets mapped to the inverse monoid $\coprod_s F(s)$, which in turn gets mapped to the following semilattice of abelian groups $G \colon \cat{T} \to \cat{Ab}$. 
  The semilattice $\cat{T}$ is given by $\{t \in F(s) \mid s \in \cat{S}, t=tt^\dag\}=\{t \in F(s) \mid s \in \cat{S}, t=tt^{-1}=1\}=\{1 \in F(s) \mid s \in \cat{S}\}$; clearly $s \mapsto 1 \in F(s)$ is an isomorphism $\varphi \colon \cat{S} \to \cat{T}$.
  The abelian group $G(\varphi(s))$ is given by $\{ x \mid xx^\dag=s \} = \{x \mid 1=xx^{-1}=s\} = \{x \in F(s)\}$; clearly $x \mapsto x$ is a natural isomorphism $\theta_s \colon F(s) \to G(\varphi(s))$. Thus $G \simeq F$, and the two functors implement an equivalence.
\end{proof}

\section{Inverse categories}\label{sec:inversecategories}

This section extends the previous one to a typed setting. 
A \emph{dagger category} is a category with a contravariant involution $\dag$ that acts as the identity on objects. A \emph{dagger functor} is a functor between dagger categories satisfying $F(f^\dag)=F(f)^\dag$.
An \emph{inverse category} is a dagger category where $f = f f^\dag f$ and $ff^\dag gg^\dag = gg^\dag ff^\dag$ for any pair of morphisms $f$ and $g$ with the same domain~\cite{cockettlack:restriction}. Equivalently, it is a category where every morphism $f\colon A \to B$ allows a unique morphism $f^\dag \colon B \to A$ satisfying $f=ff^\dag f$ and $f^\dag=f^\dag f f^\dag$; thus every functor between inverse categories is in fact a dagger functor.
Inverse categories and (dagger) functors form a category $\cat{InvCat}$, and groupoids and functors form a full subcategory $\cat{Gpd}$.
The ESN theorem extends to inverse categories, as worked out by DeWolf and Pronk~\cite{dewolfpronk:esntheorem}.

\begin{definition}
  A \emph{locally complete inductive groupoid} is an ordered groupoid with a partition of the semilattice $G_0$ of objects into semilattices $\{M_i\}$ such that two objects are comparable if and only if they are in the same semilattice $M_i$.
  Locally complete inductive groupoids form a subcategory $\cat{lcIndGpd}$ of $\cat{IndGpd}$ of those functors that preserve greatest lower bounds of objects.
\end{definition}

\begin{theorem}\label{thm:dewolfpronk}
  There is an equivalence $\cat{InvCat}\simeq\cat{lcIndGpd}$.
\end{theorem}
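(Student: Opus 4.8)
The plan is to extend the construction underlying the Ehresmann-Schein-Nambooripad theorem (Theorem~\ref{thm:esn}) from one object to many: I would build two functors $\Phi \colon \cat{InvCat} \to \cat{lcIndGpd}$ and $\Psi \colon \cat{lcIndGpd} \to \cat{InvCat}$ and show they are mutually quasi-inverse, following~\cite{dewolfpronk:esntheorem}.

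For $\Phi$, given an inverse category $\cat{C}$, I take as objects of the ordered groupoid the self-adjoint idempotents, that is, the endomorphisms $e$ with $ee^\dag = e$ (equivalently $e=e^2=e^\dag$). A morphism $f \colon A \to B$ of $\cat{C}$ becomes an arrow $f \colon f^\dag f \to f f^\dag$; identities are the idempotents themselves, composition is inherited from $\cat{C}$, and the inverse is $f^{-1}=f^\dag$. I order the idempotents by $e \leq e'$ iff $e=e'e$, order arrows by $f \leq g$ iff $f=gf^\dag f$, and let the restriction of $g$ to $e \leq g^\dag g$ be $ge$. Two idempotents are related by this order only when they are endomorphisms of the same object $A$ of $\cat{C}$, so the blocks of the partition are exactly the idempotents on each fixed $A$; within such a block the axiom $ff^\dag gg^\dag = gg^\dag ff^\dag$ makes idempotent composition commutative, so each block is a meet-semilattice with $e \wedge e' = ee'$ and top $\id[A]$. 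This produces a locally complete inductive groupoid, and a functor $F$ induces the evident monotone functor, which preserves these meets since $F(\id[A])=\id[FA]$ and $F(ee')=F(e)F(e')$.

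For $\Psi$, given a locally complete inductive groupoid $G$ with blocks $\{M_i\}$, I take the blocks as objects. A morphism $M_i \to M_j$ is an arrow of $G$ from some idempotent in $M_i$ to some idempotent in $M_j$, and the dagger is the groupoid inverse. Composition of $g \colon d \to e$ and $h \colon e' \to d'$ with $e,e' \in M_j$ is defined by restricting both arrows to the meet $m=e \wedge e'$ in the semilattice $M_j$ and composing the results in $G$, exactly as partial maps compose. One then checks that this composition is associative, using monotonicity of composition and the compatibility of restriction with composition, and that $f=ff^\dag f$ holds because $ff^{-1}$ restricts to an identity; hence $\Psi(G)$ is an inverse category. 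A morphism of locally complete inductive groupoids, being a meet-preserving monotone functor, transports to a (dagger) functor.

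Finally I would check that the two constructions are inverse up to natural isomorphism. The self-adjoint idempotents of $\Psi(G)$ are precisely the identities $\id[e]$ with $e$ ranging over a block, so they biject with $G_0$ and restore both the semilattice structure and the order, giving $\Phi\Psi \cong \id$; conversely, factoring every arrow of $\cat{C}$ through its domain and codomain idempotents recovers $\cat{C}$ from $\Phi(\cat{C})$, giving $\Psi\Phi \cong \id$. I expect the main obstacle to be verifying that the restriction-based composition in $\Psi(G)$ is well defined, associative, and compatible with the dagger: this is the step where the ordered-groupoid axioms must all be used together, and it is where the argument genuinely goes beyond the one-object case of Theorem~\ref{thm:esn}.
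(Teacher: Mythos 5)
Your proposal is correct and takes essentially the same route as the paper: your $\Phi$ is exactly the construction in the paper's proof sketch (objects the idempotents $ff^\dag$, arrows $f \colon f^\dag f \to ff^\dag$, the same order $f \leq g$ iff $f = gf^\dag f$, and restriction $ge$), and your $\Psi$, with composition by restricting to the meet, is the pseudoproduct construction of DeWolf--Pronk, to which the paper defers for all details. The only difference is one of presentation: the paper sketches just the forward direction and cites the source, whereas you outline both functors and the equivalence check.
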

\begin{proof}[Proof sketch]
  See~\cite{dewolfpronk:esntheorem} for details.
  An inverse category $\cat{C}$ turns into a locally complete inductive groupoid as follows.
  Objects are idempotents $ff^\dag$ for some endomorphism $f \colon A \to A$ in $\cat{C}$.
  These partition into the semilattices of idempotents on a fixed object $A$.
  Every morphism $f \colon A \to B$ of $\cat{C}$ becomes a morphism $f^\dag f \to ff^\dag$.
  The identity on $ff^\dag$ is $ff^\dag$ itself, and composition is inherited from $\cat{C}$.
  Inverses are given by $f^{-1}=f^\dag$.
  The order $f \leq g$ holds when $f=gf^\dag f$; clearly two identity morphisms are comparable exactly when they endomorphisms on the same object.
  The restriction of $f \colon f^\dag f \to ff^\dag$ to $s^\dag s=s \leq f^\dag f$ is $fs$.
\end{proof}

\begin{lemma}\label{lem:semilatofgroupoids}
  If $F \colon \cat{S}\op \to \cat{Gpd}$ is a semilattice of groupoids, there is a well-defined inverse category $\cat{C}$ with the same objects as $F(\top)$ and morphisms
  \[
    \cat{C}(A,B)=\coprod_{s \in \cat{S}} F(s)\big( A, B \big)\text.   
  \]
  If $(\varphi,\theta)$ is a morphism $F \to F'$ of semilattices of groupoids, then there is a dagger functor $\cat{C} \to \cat{C'}$ between their associated categories, given by $A \mapsto \theta_\top(A)$ on objects and $F(s) \ni f \mapsto \theta_s(f) \in F'(\varphi(s))$ on morphisms.
  This gives a functor $\cat{SLat}[\cat{Gpd}] \to \cat{InvCat}$.
\end{lemma}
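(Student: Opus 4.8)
The plan is to establish the three assertions in turn: that $\cat{C}$ is an inverse category, that a morphism $(\varphi,\theta)$ of semilattices of groupoids induces a dagger functor, and that this assignment is functorial. Throughout I use that each functor $F(s\leq t)\colon F(t)\to F(s)$ is the identity on objects, which is what makes the shared object set of Definition~\ref{def:slat} meaningful and guarantees that restricting a morphism preserves its domain and codomain. I write a morphism of $\cat{C}$ as a pair $(s,f)$ with $s\in\cat{S}$ and $f\in F(s)(A,B)$.

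First I would fix the data of $\cat{C}$. Composition of $(s,f)\colon A\to B$ with $(t,g)\colon B\to C$ restricts both factors to the meet and composes there,
\[
  (t,g)\circ(s,f) = \bigl(s\wedge t,\; F(s\wedge t\leq t)(g)\circ F(s\wedge t\leq s)(f)\bigr),
\]
the inner composite being taken in the groupoid $F(s\wedge t)$; the identity on $A$ is $(\top,\id[A])$ computed in $F(\top)$; and the dagger is $(s,f)^\dag=(s,f^{-1})$ using the groupoid inverse. Associativity then reduces to associativity of $\wedge$ together with functoriality of $F$, which makes iterated restrictions compose as $F(r\leq s)\circ F(s\leq t)=F(r\leq t)$; the identity laws hold because the transition functors preserve identities; and $\dag$ is a contravariant identity-on-objects involution because functors between groupoids preserve inverses.

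The two inverse-category axioms carry the content. For $f=ff^\dag f$ all three factors share the label $s$, so the meets and restrictions are trivial and the equation is just $\phi\circ\phi^{-1}\circ\phi=\phi$ inside $F(s)$. The commutativity axiom is the conceptual heart: for $f=(s,\phi)$ and $g=(t,\psi)$ with common domain $A$, both $ff^\dag$ and $gg^\dag$ are identity morphisms on $A$, namely $(s,\id[A])$ and $(t,\id[A])$, and restricting and composing shows that $ff^\dag gg^\dag$ and $gg^\dag ff^\dag$ both equal $(s\wedge t,\id[A])$ precisely because $\wedge$ is commutative. Thus the semilattice meet is the sole source of the required commutativity, with the groupoid structure contributing nothing to it.

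For the induced functor I would set $G(A)=\theta_\top(A)$ on objects and $G(s,f)=(\varphi(s),\theta_s(f))$ on morphisms. Well-definedness on objects is the subtle point: naturality of $\theta$ at the inequalities $s\leq\top$, combined with the transition functors being identity on objects, forces every $\theta_s$ to agree with $\theta_\top$ on objects, so that $\theta_s(f)$ really is a morphism $G(A)\to G(B)$. Functoriality of $G$ then follows from $\theta_s$ being a functor, from naturality of $\theta$ (to move $\theta$ past restrictions), and from $\varphi$ preserving $\top$ and meets so that $\varphi(s\wedge t)=\varphi(s)\wedge\varphi(t)$ aligns the labels; it is automatically a dagger functor since $\theta_s$ preserves inverses. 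Finally, functoriality of the assignment $\cat{SLat}[\cat{Gpd}]\to\cat{InvCat}$ is a direct check: the identity morphism $(\id[\cat{S}],\id[F])$ gives the identity functor, and the composite $(\varphi',\theta')\circ(\varphi,\theta)$, whose transformation has components $\theta'_{\varphi(s)}\circ\theta_s$, gives exactly $G'\circ G$. I expect the main labor to lie in the associativity bookkeeping and in the object-level well-definedness of $G$, whereas the commutativity axiom, though central, is short once the restriction-and-meet formalism is set up.
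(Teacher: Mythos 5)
Your proposal is correct and follows essentially the same route as the paper: the same meet-and-restrict composition $F(s\wedge t\leq t)(g)\circ F(s\wedge t\leq s)(f)$, identity $\id[A]\in F(\top)(A,A)$, and dagger given by the groupoid inverse. The paper's proof is much terser (it dismisses the inverse-category axioms and the functoriality claims as clear), so your added details — that commutativity of idempotents reduces to commutativity of $\wedge$, and that naturality of $\theta$ at $s\leq\top$ forces $\theta_s$ to agree with $\theta_\top$ on objects — are exactly the checks the paper leaves implicit.
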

\begin{proof}
  The composition of $f \in F(s)(A,B)$ and $g \in F(t)(A,B)$ is given by $F(s \wedge t \leq t)(g) \circ F(s \wedge t \leq s)(f) \in F(s \wedge t)(A,C)$; this is clearly associative.
  The identity on $A$ is given by $\id[A] \in F(\top)(A,A)$: if $f \in F(s)(A,B)$, then $f \circ \id[A] = F(s \wedge \top \leq \top)(\id[A]) \circ F(s \wedge \top \leq s)(f) = \id \circ F(s \leq s)(f)=f$.
  The dagger of $f \in F(s)(A,B)$ is given by $f^{-1} \in F(s)(B,A)$; this clearly is an inverse category.
\end{proof}

Combining Theorem~\ref{thm:dewolfpronk} and Lemma~\ref{lem:semilatofgroupoids}, we see that a semilattice of groupoids $F \colon \cat{S}\op \to \cat{Gpd}$ gives rise to a locally complete inductive groupoid $\cat{G}$ where:
\begin{itemize}
  \item objects are $\coprod_{A \in F(\top)} \coprod_{s \in \cat{S}} \{ f^\dag f \mid f \in F(s)(A,A) \}$;
  \item there is an arrow $(f^\dag f)_{A,s} \to (ff^\dag)_{B,s}$ for each $f \in F(s)(A,B)$;
  \item the composition of $f \in F(s)(A,B)$ and $g \in F(t)(B,C)$ is computed as $F(s \wedge t \leq t)(g) \circ F(s \wedge t \leq s)(f)$.
\end{itemize}
Not every locally complete inductive groupoid comes from a semilattice of groupoids in this way. 
Instead, locally complete inductive groupoids correspond to certain functors $\cat{S}\op \to \cat{Gpd}$ where $\cat{S}$ may be a disjoint union of several semilattices; a `multi-semilattice' of groupoids.

Notice that the objects of $\cat{G}$ are doubly-indexed: once by an object of the category $F(\top)$, and once by an element of the semilattice $\cat{S}$. 
Locally complete inductive groupoids and semilattices of groupoids have different ways of bookkeeping the same data, each emphasising one of these two indices.
In the remainder of the paper, we will prefer to work with semilattices of groupoids rather than the more general locally complete inductive groupoids for two reasons. 
First, the extra structure we will consider does not require `multi-semilattices', but instead is uniform enough so semilattices suffice.
Second, semilattices of groupoids form a purely categorical concept, whereas ordered groupoids require extra conditions on groupoids internal to the category of partially ordered sets that are somewhat ad hoc.
For example, this perspective will later enable us to remove the restriction that all groupoids in a semilattice of groupoids must have the same objects; see Lemma~\ref{lem:samediff} below.

\section{Compact inverse categories}\label{sec:compactinversecategories}

There is another way to categorify inverse monoids, that takes advantage of a degree of commutativity.
Instead of moving from inverse monoids to inverse categories, in this section we move to \emph{compact inverse categories}. The presence of the tensor product means that the latter specialise to commutative inverse monoids in the one-object case.
By a \emph{compact inverse category} we mean an inverse category that is also a compact dagger category under the same dagger~\cite{heunenvicary:cqm}. Here, a dagger category is compact when it is symmetric monoidal, $(f \otimes g)^\dag = f^\dag \otimes g^\dag$ for all morphisms $f$ and $g$, all coherence isomorphisms are inverted by their own daggers, and every object $A$ allows an object $A^*$ and a morphism $\eta_A \colon I \to A^* \otimes A$ satisfying
\begin{equation}\label{eq:snake}
  \id[A] = 
  \lambda_A \circ
  (\varepsilon \otimes \id[A]) \circ
  \alpha \circ
  (\id[A] \otimes \eta) \circ
  \rho_A^{-1}
\end{equation}
for $\varepsilon=\sigma \circ \eta^\dag$ where $\sigma$ is the swap map.
Let us first show that compact inverse categories indeed generalise commutative inverse monoids, because the property of compactness is hidden in the one-object case.

\begin{proposition}\label{prop:oneobject}
  One-object compact (dagger/inverse) categories are exactly commutative (involutive/inverse) monoids.  
\end{proposition}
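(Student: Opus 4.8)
The plan is to exploit the Eckmann--Hilton argument. A one-object category is just a monoid $M$ under composition, with unit $1 = \id[A]$ the identity on the unique object $A$. Since there is only one object, a monoidal structure forces $A \otimes A = A$ and $I = A$, so the unitors $\lambda_A, \rho_A$ and the associator all become invertible elements of $M$; moreover the standard coherence identity gives $\lambda_A = \rho_A$, and I write $u$ for this common element. First I would record that the tensor supplies a second binary operation on $M$, namely $f \cdot g = u \circ (f \otimes g) \circ u^{-1}$, and that bifunctoriality of $\otimes$ yields the interchange law $(f \cdot g) \circ (h \cdot k) = (f \circ h) \cdot (g \circ k)$.

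Next I would check that $1$ is a two-sided unit for $\cdot$ as well. Naturality of the unitors gives $f \otimes 1 = \rho_A^{-1} \circ f \circ \rho_A$ and $1 \otimes f = \lambda_A^{-1} \circ f \circ \lambda_A$; substituting $u = \lambda_A = \rho_A$ collapses both $f \cdot 1$ and $1 \cdot f$ to $f$. Now both $\circ$ and $\cdot$ are unital with the same unit $1$ and satisfy interchange, so the Eckmann--Hilton argument forces them to coincide and to be commutative. Hence composition in $M$ is commutative, i.e.\ $M$ is a commutative monoid; note that the symmetry $\sigma$ is not even needed for this half. Conversely, any commutative monoid $M$ is realised as a strict one-object symmetric monoidal category in which the tensor \emph{is} the multiplication; it is compact by taking the unique object to be its own dual with $\eta = \varepsilon = 1$, so the snake equation~\eqref{eq:snake} holds trivially. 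These two constructions are visibly mutually inverse, establishing the correspondence on the nose.

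Finally I would add the dagger and inverse decorations, which ride along for free. A dagger on the one-object category is a monoid involution on $M$, and commutativity upgrades the contravariant law $(fg)^\dag = g^\dag f^\dag$ to $(fg)^\dag = f^\dag g^\dag$, making $M$ a commutative involutive monoid; the inverse-category axioms $f = ff^\dag f$ and $f^\dag = f^\dag f f^\dag$ are literally the defining axioms of a commutative inverse monoid. In the reverse direction the involution becomes the dagger, and since all coherence isomorphisms are identities in the strict model and $\eta = \varepsilon = 1$ are self-adjoint, all the compact-dagger conditions hold. I expect the only delicate point to be the bookkeeping of the non-strict unitors in the first half --- getting the conjugation by $u$ right so that $\id[A]$ really is a unit for the tensor-induced multiplication; once that identity is in place, Eckmann--Hilton does all the work and the remainder is routine verification.
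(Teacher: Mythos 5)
Your proof is correct and takes essentially the same route as the paper: the forward direction is the commutativity-of-scalars argument (the Eckmann--Hilton interchange trick, which the paper simply cites to~\cite{abramsky:scalars}), and the converse makes the unique object self-dual, just as the paper does by taking $\eta=\lambda_I^{-1}$ and $\varepsilon=\rho_I$. The only cosmetic differences are that you spell out the Eckmann--Hilton details (including the conjugation by the common unitor) rather than citing them, and that you strictify the monoidal structure in the converse where the paper invokes coherence.
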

\begin{proof}
  Let $M$ be a commutative  monoid. 
  Regard it as a one-object monoidal category. 
  The one object is the tensor unit, and in any monoidal category, the tensor unit $I$ is its own dual $I^*=I$, since $\eta=\lambda_I^{-1}$ and $\varepsilon=\rho_I$ satisfy~\eqref{eq:snake} by coherence~\cite[Lemma~3.6]{heunenvicary:cqm}.
  If the monoid is involutive/inverse, then the category is clearly dagger/inverse.

  Conversely, a one-object (dagger) category is clearly an (involutive) monoid.
  If the category is monoidal, then the monoid is necessarily that of scalars $I \to I$, where tensor and composition coincide and are commutative~\cite{abramsky:scalars}.
\end{proof}

We now set out to generalise Theorem~\ref{thm:jarek} to compact inverse categories $\cat{C}$. They have the right modicum of commutativity to take advantage of Lemma~\ref{lem:semilatofgroupoids}:
the monoid $\cat{C}(I,I)$ of \emph{scalars} is always commutative, 
any morphism $f \colon A \to B$ can be multiplied with a scalar $s \colon I \to I$ to give $s \bullet f = \lambda \circ (s \otimes f) \circ \lambda^{-1}$, 
and any endomorphism $f \colon A \to A$ has a \emph{trace} $\Tr(f) = \varepsilon \circ (f \otimes \id[A^*]) \circ \sigma \circ \eta \colon I \to I$.
Furthermore, any morphism $f \colon A \to B$ has a \emph{dual} $f^* = (\id[A^*] \otimes \varepsilon_B) \circ (\id[A^*] \otimes f \otimes \id[B^*]) \circ (\eta_A \otimes \id[B^*]) \colon B^* \to A^*$, satisfying $\Tr(f^*)=\Tr(f)^*$ when $A=B$.
We will write $\tr(f)$ instead of $\Tr(f)^*$.
The form of the following lemma resembles the categorical no-cloning theorem~\cite{abramsky:nocloning}, and is the heart of the matter.

\begin{lemma}\label{lem:endocollapse}
  In a compact inverse category, any endomorphism $f$ equals $\tr(f) \bullet \id$.
\end{lemma}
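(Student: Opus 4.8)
The plan is to pass to the compact-closed `name' representation, where the statement reduces to a one-dimensionality claim about states; this is where the inverse structure, rather than the compact structure, does the real work.

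First I would record the consequences of the axioms that I will use repeatedly. Since $\eta_A \colon I \to A^*\otimes A$ is itself a morphism of the inverse category, it is a partial isometry, $\eta_A = \eta_A\eta_A^\dag\eta_A$. Writing $P = \eta_A\eta_A^\dag \colon A^*\otimes A \to A^*\otimes A$ and $p = \eta_A^\dag\eta_A \colon I \to I$, this makes $P$ a self-adjoint idempotent with $P\eta_A = \eta_A$, and $p$ a self-adjoint idempotent scalar. Unfolding the definitions of $\varepsilon = \sigma\circ\eta_A^\dag$ and of the trace identifies the scalar $\eta_A^\dag\circ(\id[A^*]\otimes f)\circ\eta_A$ with $\tr(f)$, and in particular $p$ with $\tr(\id[A])$.

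Next I would set up the bijection. Compactness makes $f \mapsto \ulcorner f\urcorner := (\id[A^*]\otimes f)\circ\eta_A$ a bijection $\cat{C}(A,A) \cong \cat{C}(I, A^*\otimes A)$ that sends $\id[A]$ to $\eta_A$ and is compatible with scalar multiplication, $\ulcorner s\bullet f\urcorner = s\bullet\ulcorner f\urcorner$. Under this bijection the desired identity $f = \tr(f)\bullet\id[A]$ becomes $\ulcorner f\urcorner = \tr(f)\bullet\eta_A$. Since for any state $g$ one has $(\eta_A^\dag g)\bullet\eta_A = \eta_A\circ(\eta_A^\dag g) = P g$ by associativity, and since the extracted scalar is $\eta_A^\dag\ulcorner f\urcorner = \tr(f)$ by the previous step, the whole lemma is equivalent to the \emph{collapse statement}: every state $g \colon I \to A^*\otimes A$ is fixed by $P$, i.e.\ $g = \eta_A\eta_A^\dag g$; equivalently, every state is a scalar multiple of $\eta_A$. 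As a sanity check the case $f = \id[A]$ is immediate from the partial isometry law alone: $\eta_A = \eta_A\circ p = p\bullet\eta_A$ translates under the bijection into $\id[A] = p\bullet\id[A] = \tr(\id[A])\bullet\id[A]$.

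For a general state $g = \ulcorner f\urcorner$ I would prove $g = Pg$ by combining the two halves of the inverse axiom with the compact sliding identities. Because $g$ and $\eta_A$ share the domain $I$, the range projections $gg^\dag$ and $P = \eta_A\eta_A^\dag$ commute; the partial isometry law $g = gg^\dag g$ then controls $g$ entirely through $gg^\dag$, while sliding $f$ across the cup via its transpose rewrites $gg^\dag\eta_A$ as a scalar multiple of $\eta_A$. The hope is that two commuting self-adjoint idempotents, one of which is the `line through $\eta_A$', must have comparable images there, forcing $Pg = g$. The main obstacle is exactly this collapse, which is the no-cloning-style core of the lemma: in a general dagger compact category such as finite-dimensional Hilbert spaces the projection $P$ certainly does \emph{not} fix arbitrary states, so the argument cannot be any formal compact-closed manipulation and must genuinely use that every morphism is a partial isometry and that all range projections on a common object commute. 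Concretely, the delicate point is to show that the commuting idempotents coming from $f$ cannot carry $\eta_A$ off its own line --- equivalently that $\id[A^*]\otimes f$ commutes with $\eta_A\eta_A^\dag$ --- and I expect this to require the full force of $ff^\dag gg^\dag = gg^\dag ff^\dag$ together with the snake equation~\eqref{eq:snake}.
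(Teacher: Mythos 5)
Your opening reduction is correct, but it is only a change of notation: via the name bijection $f\mapsto\ulcorner f\urcorner=(\id[A^*]\otimes f)\circ\eta_A$, the lemma is indeed equivalent to your collapse statement that every state of $A^*\otimes A$ is fixed by $P=\eta_A\eta_A^\dag$, and the case $f=\id[A]$ follows from the partial-isometry law as you say. The gap is that nothing is then proved about the general case: the proposal ends with a ``hope'' and an expectation, and the two mechanisms you gesture at do not work. First, the facts you actually assemble about $g=\ulcorner f\urcorner$ and $\eta_A$ --- both are partial isometries out of $I$ whose range idempotents $gg^\dag$ and $\eta_A\eta_A^\dag$ commute --- cannot possibly suffice: the same facts hold for two distinct points $x_1,x_2\colon I\to X$ of a two-element set in $\cat{PInj}$ (their range idempotents are disjoint partial identities, which commute), yet neither state is a scalar multiple of the other. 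So ``commuting self-adjoint idempotents must have comparable images'' is not a valid principle; any correct argument must inject the snake equation into the dagger algebra in an essential way, which the proposal never does. Second, the auxiliary claim that sliding rewrites $gg^\dag\eta_A$ as a scalar multiple of $\eta_A$ is circular: what comes for free is $gg^\dag\eta_A=(g^\dag\eta_A)\bullet g$, a scalar multiple of $g$; it is a multiple of $\eta_A$ only if $g$ already is, which is the conclusion. The same goes for your final reduction: ``$\id[A^*]\otimes f$ commutes with $\eta_A\eta_A^\dag$'' is again equivalent to the lemma, so no ground has been gained.

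For comparison, the paper's proof supplies exactly the missing mechanism, and it operates on maps $A\otimes A^*\otimes A\to A$ rather than on states. Put $u=\varepsilon\otimes\id[A]$ and $v=\id[A]\otimes\eta^\dag$ (coherences suppressed); the snake equations become the dagger-algebraic identities $uv^\dag=\id[A]=vu^\dag$, and the inverse-category axioms then force $u=v$: first $vv^\dag=u(v^\dag vv^\dag)=uv^\dag=\id[A]$; next $u^\dag v=u^\dag(uv^\dag)v=(u^\dag u)(v^\dag v)=(v^\dag v)(u^\dag u)=v^\dag(vu^\dag)u=v^\dag u$; hence $u=vv^\dag u=v(u^\dag v)=(vu^\dag)v=v$. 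Substituting $v$ for $u$ in the standard compact identity $f=u\circ(\id[A]\otimes f^*\otimes\id[A])\circ v^\dag$ gives $f=v\circ(\id[A]\otimes f^*\otimes\id[A])\circ v^\dag=\Tr(f^*)\bullet\id[A]=\tr(f)\bullet\id[A]$. Note the shape of this argument: the snake equations are first converted into equations relating $u$, $v$ and their daggers, and only then is commutation of idempotents used. Your proposal would need a step of exactly this kind --- some concrete use of compactness beyond the existence of names --- and at present it contains none.
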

\begin{proof}
  Let $f \colon A \to A$ be an endomorphism.
  Compactness provides $\eta \colon I \to A^* \otimes A$ and $\varepsilon \colon A \otimes A^* \to I$ satisfying the snake equations. 
  In terms of $g=\varepsilon \otimes \id[A]$ and $h=\id[A] \otimes \eta^\dag = \id[A] \otimes (\varepsilon \circ \sigma)$, and suppressing coherence isomorphisms, these equations read $gh^\dag = \id[A] = hg^\dag$.
  It follows that
  \begin{align*}
    hh^\dag = gh^\dag hh^\dag = gh^\dag = \id[A]\text,\\
    g^\dag h = g^\dag g h^\dag h = h^\dag h g^\dag g = h^\dag g\text.
  \end{align*}
  Therefore $g = hh^\dag g = hg^\dag h = h$, and so
  \[
    f 
    = g \circ (\id[A] \otimes f^* \otimes \id[A]) \circ h^\dag
    = h \circ (\id[A] \otimes f^* \otimes \id[A]) \circ h^\dag
    = \Tr(f^*) \bullet \id[A]\text.\qedhere
  \]
\end{proof}

\begin{proposition}\label{prop:nonendo}
  A compact dagger category is a compact inverse category if and only if every morphism $f$ satisfies $f=\tr(f  f^\dag) \bullet f$.
\end{proposition}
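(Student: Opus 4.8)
The plan is to treat the two implications separately: the forward direction reduces immediately to Lemma~\ref{lem:endocollapse}, while the reverse direction requires an honest verification of the two inverse-category axioms and is where the real work lies.

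For the ``only if'' direction, assume $\cat{C}$ is a compact inverse category and let $f \colon A \to B$. The endomorphism $ff^\dag \colon B \to B$ falls within the scope of Lemma~\ref{lem:endocollapse}, so $ff^\dag = \tr(ff^\dag) \bullet \id[B]$. Precomposing with $f$, and using that scalar multiplication commutes with composition because the scalars form a commutative monoid~\cite{abramsky:scalars}, I obtain
\[
  ff^\dag f = \big(\tr(ff^\dag) \bullet \id[B]\big) \circ f = \tr(ff^\dag) \bullet f\text.
\]
Since $\cat{C}$ is an inverse category the left-hand side is simply $f$, giving the claimed identity.

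For the ``if'' direction, assume $\cat{C}$ is a compact dagger category in which every morphism satisfies $f = \tr(ff^\dag) \bullet f$, and verify that $\dag$ makes $\cat{C}$ an inverse category; that is, I must establish (i)~$f = ff^\dag f$ and (ii)~$ff^\dag\, gg^\dag = gg^\dag\, ff^\dag$ for parallel idempotents. The key intermediate claim is that the hypothesis already forces each self-adjoint endomorphism $ff^\dag$ to collapse to a scalar, $ff^\dag = \tr(ff^\dag) \bullet \id$ --- exactly the conclusion of Lemma~\ref{lem:endocollapse}, but now to be extracted from the normalisation equation rather than from the inverse axioms. Granting this collapse, axiom~(i) is immediate, since $ff^\dag f = (\tr(ff^\dag)\bullet\id)\circ f = \tr(ff^\dag)\bullet f = f$ by hypothesis; and axiom~(ii) follows because each of $ff^\dag$ and $gg^\dag$ is then a scalar multiple of an identity, and such morphisms commute by centrality of the scalars.

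The main obstacle is precisely this collapse in the reverse direction. The proof of Lemma~\ref{lem:endocollapse} derived the equality $g = h$ of the two half-snake maps $g = \varepsilon \otimes \id$ and $h = \id \otimes \eta^\dag$ from the inverse identities $h^\dag h h^\dag = h^\dag$ and $hh^\dag gg^\dag = gg^\dag hh^\dag$, so I cannot simply reuse that argument here. Instead I plan to combine the snake equations $gh^\dag = \id = hg^\dag$ with the normalisation hypothesis applied to the maps $g$, $h$, and $f$ themselves in order to pin down $g = h$ and hence the collapse. I expect the delicate point to be controlling the dimension scalar $d = \eta^\dag \eta$, for which the identity $hh^\dag = d \bullet \id$ must be forced down to $hh^\dag = \id$ using the hypothesis. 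This is the categorical shadow of the cautionary fact that the normalisation equation fails in $\cat{FdHilb}$ --- already for $f = \id$, where $\tr(ff^\dag)\bullet f = (\dim A)\bullet\id \neq \id$ --- precisely because there the dimension scalar is nontrivial, which is the phenomenon the hypothesis is designed to rule out.
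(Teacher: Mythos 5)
Your forward direction is correct and is exactly the paper's own argument. The problem is the reverse direction: you reduce everything to the collapse $ff^\dag = \tr(ff^\dag)\bullet\id$, and then never prove it --- you only announce a plan (``I plan to combine the snake equations with the normalisation hypothesis\dots''). That deferred step is not a detail; it is the entire mathematical content of the implication, so as written your proof has a genuine gap precisely where you say the real work lies.

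Moreover, the gap cannot be closed, because the collapse is not a consequence of the stated hypothesis. Consider the category $\cat{Rel}$ of sets and relations, a compact dagger category whose only scalars are $0$ and $\id[I]$. For any relation $f \neq 0$ the endomorphism $ff^\dag$ contains a diagonal point, so $\tr(ff^\dag)=\id[I]$ and the equation $f=\tr(ff^\dag)\bullet f$ holds trivially, while for $f=0$ both sides are $0$; thus $\cat{Rel}$ satisfies the hypothesis at every morphism. But $\cat{Rel}$ is not an inverse category: for $f=\{(x,x),(x,y),(y,y)\}$ on a two-element set, $ff^\dag f$ is the full relation, so $f \neq ff^\dag f$. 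In particular, in $\cat{Rel}$ your two half-snake maps satisfy $gg^\dag = hh^\dag = gh^\dag = hg^\dag = \id$ and every instance of normalisation, and yet $g \neq h$; so no argument from the snake equations plus the normalisation hypothesis can force $g=h$. (Your dimension sub-step is fine --- normalisation at $\id[A]$ does give $\dim(A)\bullet\id[A]=\id[A]$ --- but it is not the bottleneck.) Note also that this obstruction is not an artifact of your route: the paper's proof instead verifies the four restriction axioms for $\bar{f}=\tr(ff^\dag)\bullet\id$ and appeals to \cite[Theorem~2.20]{cockettlack:restriction}, but it justifies the third and fourth axioms by citing Lemma~\ref{lem:endocollapse}, which is available only in compact inverse categories, and its fourth-axiom identity $\tr(gg^\dag)\bullet\tr(ff^\dag)\bullet f=\tr(gff^\dag g^\dag)\bullet f$ fails in $\cat{Rel}$ (take nonzero $f$, $g$ with $gf=0$). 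So you have put your finger on a real obstruction rather than a solvable difficulty: the ``if'' direction of Proposition~\ref{prop:nonendo} as stated is contradicted by $\cat{Rel}$, and repairing it requires strengthening the hypothesis (for instance to $ff^\dag=\tr(ff^\dag)\bullet\id$ for all $f$), not a cleverer derivation of the collapse.
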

\begin{proof}
  Suppose we're given a compact inverse category. By Lemma~\ref{lem:endocollapse}, the endomorphism $f  f^\dag$ equals $\tr(f f^\dag f f^\dag) \bullet \id = \tr(f f^\dag) \bullet \id$. Hence $f=\tr(f f^\dag) \bullet f$.

  Conversely, suppose given a compact dagger category in which every morphism satisfies $f=\tr(f f^\dag) \bullet f$. We will prove that this is a \emph{restriction category} with $\bar{f} = \tr(f f^\dag) \bullet \id$, by verifying the four axioms~\cite{cockettlack:restriction}.

  First, $f \bar{f} = \tr(f f^\dag) \bullet f = f$.
  Second, $\bar{f} \bar{g} = \tr(f f^\dag) \bullet \tr(g g^\dag) \bullet \id = \bar{g} \bar{f}$ if $\dom(f)=\dom(g)$.
  Third, 
  \begin{align}
	& \tr(ff^\dag)^\dag \circ \tr(ff^\dag) \notag\\
	& = (\varepsilon \otimes \varepsilon) \circ (\sigma \otimes \id) \circ (ff^\dag \otimes \id \otimes ff^\dag \otimes \id) \circ (\id \otimes \sigma)\circ (\eta \otimes \eta) \notag\\
	& = \varepsilon \circ (ff^\dag ff^\dag \otimes \id) \circ \sigma \circ \eta \notag\\
	& = \varepsilon \circ (ff^\dag \otimes \id) \circ \sigma \circ \eta \notag\\	
	& = \tr(ff^\dag) \tag{$*$}\label{eq:snaketrace}
  \end{align}
  by Lemma~\ref{lem:endocollapse}.
  Therefore, for $\dom(f)=\dom(g)$:
  \begin{align*}
	\overline{g  \bar{f}} 
	& = \overline{\tr(f f^\dag) \bullet g} \\
	& = \tr \big[ \tr(f f^\dag)^\dag \bullet \tr(f f^\dag) \bullet g g^\dag \big] \bullet \id \\
	& = \tr(f f^\dag)^\dag \bullet \tr(f f^\dag) \bullet \tr(g g^\dag) \bullet \id \\
	& = \tr(f f^\dag) \bullet \tr(g g^\dag) \bullet \id \\
	& = \bar{g} \bar{f}.
  \end{align*}
  Fourth,
  $\bar{g}  f
  = \tr(g g^\dag) \bullet f 
  = \tr(g g^\dag) \bullet \tr(f f^\dag) \bullet f$,
  and $f \overline{g f}
  = \tr(g f f^\dag g^\dag) \bullet f$. The two are equal by a similar computation as~\eqref{eq:snaketrace}.

  Finally, taking $g=f^\dag$ shows that $\bar{f}=\tr(f f^\dag) \bullet \id = g f$ by Lemma~\ref{lem:endocollapse} and similarly $\bar{g}=f g$. Therefore the category is compact inverse~\cite[Theorem~2.20]{cockettlack:restriction}.
\end{proof}

Next we build up to generalise Theorem~\ref{thm:jarek}, starting with the replacement for abelian groups. A \emph{compact groupoid} is a compact dagger category where any morphism $f$ is inverted by $f^\dag$.

\begin{lemma}\label{lem:cptgpd}
  Compact groupoids are precisely compact inverse categories with invertible scalars.
\end{lemma}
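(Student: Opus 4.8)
The statement is a biconditional, so the plan is to prove the two inclusions separately, with the forward direction being essentially immediate and the reverse direction carrying all the content.

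For the easy direction, suppose we are given a compact groupoid. Every morphism $f$ then satisfies $f^\dag f = \id$ and $f f^\dag = \id$, from which the inverse-category axioms $f = f f^\dag f$ and $f f^\dag g g^\dag = g g^\dag f f^\dag$ follow trivially, while compactness is assumed outright. In particular every scalar $s \colon I \to I$ has $s^\dag s = \id[I] = s s^\dag$, so $s^\dag$ is a two-sided inverse and the scalars are invertible.

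For the reverse direction, suppose $\cat{C}$ is a compact inverse category all of whose scalars are invertible; I must show that each $f$ is inverted by $f^\dag$. The key move is to apply Lemma~\ref{lem:endocollapse} to the endomorphism $f f^\dag$, which yields $f f^\dag = \tr(f f^\dag) \bullet \id$. It therefore suffices to prove that the scalar $s = \tr(f f^\dag)$ equals the identity $\id[I]$. For this I would reuse the computation~\eqref{eq:snaketrace} from the proof of Proposition~\ref{prop:nonendo}, which shows $s^\dag \circ s = s$. Since the scalars form a commutative monoid and $s$ is invertible by hypothesis, cancelling $s$ on the right gives $s^\dag = \id[I]$, and applying the involution gives $s = \id[I]$. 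Hence $f f^\dag = \id[I] \bullet \id = \id$; running the identical argument on $f^\dag$, whose associated endomorphism is $f^\dag f$, gives $f^\dag f = \id$ as well. Thus $f$ is inverted by $f^\dag$ and $\cat{C}$ is a compact groupoid.

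The only real obstacle lies in the reverse direction, and within it a single observation does all the work: the trace scalar $\tr(f f^\dag)$ is simultaneously constrained by $s^\dag s = s$ and invertible, and these two facts together force it to be the monoidal unit. Everything else is routine bookkeeping with the coherence isomorphisms suppressed inside $\bullet$ and $\tr$.
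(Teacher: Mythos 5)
Your proof is correct, and it shares the paper's key ingredient: the easy direction is dispatched as trivial, and the hard direction rests on Lemma~\ref{lem:endocollapse} applied to the endomorphisms $ff^\dag$ and $f^\dag f$. But the closing move is genuinely different. The paper never computes the scalar $\tr(ff^\dag)$: it observes that invertible scalars together with Lemma~\ref{lem:endocollapse} make \emph{every} endomorphism invertible, so $ff^\dag$ and $f^\dag f$ are isomorphisms; this gives $f$ a one-sided inverse, so $f$ can be cancelled in the inverse-category equation $ff^\dag f = f$, yielding $ff^\dag = \id$ and symmetrically $f^\dag f = \id$. You instead pin down the value of the scalar: $s=\tr(ff^\dag)$ satisfies $s^\dag s = s$ by~\eqref{eq:snaketrace}, and an invertible element of the commutative scalar monoid satisfying this equation must be $\id[I]$, whence $ff^\dag = s \bullet \id = \id$. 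Both routes are sound; the paper's cancellation trick is slightly more elementary (no trace computation at all), while yours extracts the sharper fact that in a compact groupoid every scalar of the form $\tr(ff^\dag)$ equals $\id[I]$, which the paper only obtains a posteriori. One step you should justify explicitly: equation~\eqref{eq:snaketrace} is derived inside the converse half of Proposition~\ref{prop:nonendo}, under the hypothesis $f = \tr(ff^\dag)\bullet f$ rather than for compact inverse categories per se. Importing it is harmless and non-circular --- either because compact inverse categories satisfy that hypothesis by the forward half of that proposition (which precedes this lemma and uses only Lemma~\ref{lem:endocollapse}), or more directly because the derivation of~\eqref{eq:snaketrace} uses nothing beyond dagger-compact graphical reasoning and the identity $(ff^\dag)(ff^\dag)=ff^\dag$, which is immediate from $f = ff^\dag f$ --- but as written your appeal to it is a citation across a hypothesis boundary.
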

\begin{proof}
  Let $\cat{C}$ be a compact inverse category with invertible scalars.
  By Lemma~\ref{lem:endocollapse}, all endomorphisms are invertible.
  Let $f \colon A \to B$ be any morphism.
  Then $f f^\dag$ is an isomorphism, and so $f$ is (split) monic. Because $f=ff^\dag f$, it follows that $ff^\dag = \id[B]$.
  Similarly $f^\dag f$ is an isomorphism, so $f$ is (split) epic, whence $f^\dag f=\id[A]$.
  Thus $f$ is invertible.	
\end{proof}

We can now show that any compact inverse category is a semilattice of compact groupoids. Write $\cat{CptInvCat}$ for the category of compact inverse categories and (strong) monoidal dagger functors, and $\cat{CptGpd}$ for the full subcategory of compact groupoids and (strong) monoidal functors.

\begin{proposition}\label{prop:semilatofcptgpds}
  If $\cat{C}$ is a compact inverse category, then
  \[
    \cat{S} = \{ s \in \cat{C}(I,I) \mid ss^\dag=s \}, \qquad s \wedge t = st, \qquad \top = \id[I],
  \]
  is a semilattice, and for each $s \in \cat{S}$,
  there is a compact groupoid $F(s)$ with the same objects as $\cat{C}$ and morphisms
  \[
    F(s)(A,B) = \{ f \in \cat{C}(A,B) \mid \tr(ff^\dag)=s \}\text,
  \]
  giving a semilattice $F \colon \cat{S}\op \to \cat{CptGpd}$ of compact groupoids $F(s\leq t)(f) \mapsto s \bullet f$.

  The assignment $\cat{C} \mapsto F$ extends to a functor $\cat{CptInvCat} \to \cat{SLat}[\cat{CptGpd}]$ by sending a morphism $G \colon \cat{C} \to \cat{C'}$ to
  \[
    \varphi(s)=\psi_0^{-1} \circ G(s)\circ \psi_0, \qquad \theta_s(A)=G(A), \qquad \theta_s(f)=G(f)\text,
  \]
  where $\psi_0 \colon I' \to G(I)$ is the structure isomorphism.
\end{proposition}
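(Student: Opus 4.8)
The plan is to transport the proof of Theorem~\ref{thm:jarek} through the compact structure, letting Lemma~\ref{lem:endocollapse} and Proposition~\ref{prop:nonendo} play the role that commutativity of the monoid played in the one-object case. First I would dispatch the claim that $\cat{S}$ is a semilattice exactly as in Theorem~\ref{thm:jarek}: the scalars $\cat{C}(I,I)$ form a commutative monoid, and for $s \in \cat{S}$ the equation $ss^\dag = s$ forces $s^\dag = s$ and hence $s^2 = s$, so $\cat{S}$ is a commutative idempotent monoid with meet given by multiplication and top element $\id[I]$.

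Next I would construct each $F(s)$. The identity on $A$ has to be $s \bullet \id[A]$, which is legitimate because any $f \in F(s)$ satisfies $s \bullet f = \tr(ff^\dag) \bullet f = f$ by Proposition~\ref{prop:nonendo}, so $s \bullet \id[A]$ is a two-sided unit for $F(s)$-morphisms. Closure under $\dag$ follows from cyclicity of the trace, $\tr(f^\dag f) = \tr(ff^\dag)$. For closure under composition, given $f \in F(s)(A,B)$ and $g \in F(s)(B,C)$, Lemma~\ref{lem:endocollapse} turns $ff^\dag$ and $gg^\dag$ into $s \bullet \id$, so that $(gf)(gf)^\dag = g(s \bullet \id[B])g^\dag = s \bullet gg^\dag = s \bullet \id[C]$ using $s^2 = s$; closure under $\otimes$ is analogous. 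That $F(s)$ is then a compact groupoid is cheap: for $f \in F(s)$ we have $ff^\dag = s\bullet\id[B]$ and $f^\dag f = s \bullet \id[A]$, so $f^\dag$ is a genuine inverse, and the inherited compact structure (with $\eta$ rescaled by $s$) makes it compact, so Lemma~\ref{lem:cptgpd} applies.

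The one step I expect to be the real obstacle is the identity $\tr(s \bullet \id[A]) = s$, which is exactly what places $s \bullet \id[A]$ inside $F(s)(A,A)$ and makes the composite above land back in $F(s)$. Since $\Tr(s \bullet \id[A]) = s \cdot \Tr(\id[A])$ and the dual acts trivially on scalars, this reduces to showing that every object has \emph{trivial dimension}, $\Tr(\id[A]) = \eta_A^\dag \circ \eta_A = \id[I]$, equivalently that $\eta_A$ is total. Lemma~\ref{lem:endocollapse} already supplies $\id[A] = \Tr(\id[A]) \bullet \id[A]$, but de-linearising this to $\Tr(\id[A]) = \top$ does not follow formally from the snake equations~\eqref{eq:snake}, since those only straighten zig-zags and say nothing about the loop $\eta_A^\dag\eta_A$; it is precisely here that compactness must be combined with the inverse axiom. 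This is the heart of the matter, and the genuinely new phenomenon relative to the one-object case, where $A = I$ makes the dimension trivial for free.

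Finally I would verify functoriality at both levels. Each $F(s \leq t)(f) = s \bullet f$ is an identity-on-objects strong monoidal dagger functor, well defined by the same computation (for $f \in F(t)$ one gets $\tr((s\bullet f)(s\bullet f)^\dag) = \tr(s \bullet \id) = s$ using $st = s$ together with the crux above), and functorial in the poset because $\bullet$ is an action; this yields $F \in \cat{SLat}[\cat{CptGpd}]$ in the sense of Definition~\ref{def:slat}. For a morphism $G$ with structure isomorphism $\psi_0$, I would then check that $\varphi(s) = \psi_0^{-1}\circ G(s)\circ\psi_0$ is a semilattice morphism $\cat{S}\to\cat{S}'$ (preservation of $\top$ and of $\wedge$ coming from $\psi_0$ and strong monoidality), that $\theta_s = G$ defines a functor $F(s)\to F'(\varphi(s))$, and that $\theta \colon F \To F'\circ\varphi$ is natural, which amounts to $G(s\bullet f) = \varphi(s)\bullet G(f)$ and is immediate from monoidality. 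The secondary obstacle here is well-definedness of $\theta_s$, namely $\tr'(G(f)G(f)^\dag) = \varphi(s)$: this requires that a strong monoidal dagger functor preserves the (twisted) trace, $\tr'(G(h)) = \varphi(\tr(h))$, which in turn uses that such functors preserve duals and the unit $\eta$ up to the coherent structure isomorphisms. Functoriality of $\cat{C}\mapsto F$ in $G$ is then routine bookkeeping.
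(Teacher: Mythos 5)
Your overall architecture is the paper's (idempotent scalars form a semilattice; $F(s)$ is closed under dagger, composition and tensor via Lemma~\ref{lem:endocollapse} and Proposition~\ref{prop:nonendo}; invertibility makes each $F(s)$ a compact groupoid; functoriality is bookkeeping), but the step you yourself call ``the heart of the matter'' is left as an unproved assertion, and in fact it is not provable: there is no way to derive $\tr(s \bullet \id[A]) = s$ for all $s \in \cat{S}$ and all objects $A$, equivalently $\dim(A) = \tr(\id[A]) = \top$ for every object. Counterexample: take objects $A_0, A_1, A_2, \dots$ with strict tensor $A_n \otimes A_m = A_{n+m}$, tensor unit $A_0 = I$, and $A_n^* = A_n$; let $\cat{C}(A_0,A_0) = \{\id[I], d\}$ with $d$ idempotent and self-adjoint, let $\cat{C}(A_n,A_m)$ be a singleton $\{u_{nm}\}$ whenever $n \equiv m \pmod 2$ and $(n,m)\neq(0,0)$, and empty otherwise, with $u_{00}:=d$; composition and tensor of the $u_{nm}$ are forced by the indices ($u_{mk}\circ u_{nm} = u_{nk}$, $u_{nm}\otimes u_{kl} = u_{n+k,m+l}$), all coherence maps and symmetries are identities, and $u_{nm}^\dag = u_{mn}$. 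Every endomorphism monoid is commutative and $ff^\dag f = f$ throughout, so this is an inverse category; and $\eta_{A_n} = u_{0,2n}$ satisfies the snake equations, so it is compact. Yet $\dim(A_1) = \eta_{A_1}^\dag \circ \eta_{A_1} = u_{20}\circ u_{02} = d \neq \top$. So your plan to prove triviality of dimension ``by combining compactness with the inverse axiom'' cannot succeed; in this example $F(\top)(A_1,A_1) = \emptyset$, so there is nothing at all to serve as an identity. (To be fair, this is also the weak point of the paper's own proof, whose ``it is clear that $s \bullet \id[A]$ play the role of identities in $F(s)$'' silently needs $s \cdot \dim(A) = s$; the honest repair is to let $F(s)$ contain only the objects with $s\cdot\dim(A)=s$, i.e.\ to give the $F(s)$ different object sets, which is exactly the tension the paper defers to Lemma~\ref{lem:samediff}.)

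The second, fixable, divergence from the paper is methodological and worth internalising: your closure computations always collapse $ff^\dag$ to $s \bullet \id$ and then trace a bare identity, which is what drags the dimension $\tr(\id[B])$ into every verification; the paper's computations never trace a bare identity. For composition, once you know $(gf)(gf)^\dag = s \bullet \id[C]$, you do not need any dimension claim: since $s = \tr(gg^\dag)$, Lemma~\ref{lem:endocollapse} gives $s \bullet \id[C] = gg^\dag$, whence $\tr(s\bullet\id[C]) = \tr(gg^\dag) = s$ outright --- this witness reabsorption is precisely what the paper's displayed chain of trace identities (linearity plus cyclicity) accomplishes. Similarly, for well-definedness of $F(s\leq t)$ the paper computes $\tr((s\bullet f)(s\bullet f)^\dag) = ss^\dag \tr(ff^\dag) = st = s$ by pulling the scalar out by linearity while keeping $ff^\dag$ inside the trace, never passing through $\tr(\id[B])$. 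The pattern is that $s\cdot\dim(A)=s$ is derivable exactly when $F(s)$ is already known to contain a morphism with (co)domain $A$ --- true for composites, daggers and tensors of members of $F(s)$, but not for the identities $s\bullet\id[A]$ at an arbitrary object, which is where both your proposal and the literal statement break down.
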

\begin{proof}
  First, $\cat{S}$ is a commutative idempotent monoid by definition.

  Next, we verify that $F(s)$ is a compact groupoid.
  Composition is well-defined: if $f \colon A \to B$ and $g \colon B \to C$ satisfy $\tr(ff^\dag)=s=\tr(gg^\dag)$, then by Lemma~\ref{lem:endocollapse} and linearity and cyclicity of trace:
  \begin{align*}
    \tr\big((gf)(gf)^\dag\big)
    & = \tr(g^\dag g ff^\dag) \\
    & = \tr\big[ (\tr(g^\dag g) \bullet \id[B]) \circ (\tr(ff^\dag) \bullet \id[B]) \big] \\
    & = \tr(g^\dag g) \bullet \tr(ff^\dag) \bullet \tr(\id[B]) \\
    & = \tr(ff^\dag) \bullet \tr(\id[B]) \\
    & = \tr\big[ \id[B] \circ (\tr(ff^\dag) \bullet \id[B])] \\
    & = \tr(\id[B] \circ ff^\dag) \\
    & = \tr(ff^\dag) \\
    & = s\text.
  \end{align*}
  It is clear that $s \bullet \id[A]$ play the role of identities in $F(s)$.
  The category $F(s)$ is monoidal, because if $\tr(ff^\dag) = s = \tr(gg^\dag)$, then $\tr((f \otimes g) (f \otimes g)^\dag) = \tr(ff^\dag \otimes gg^\dag) = \tr(ff^\dag) \tr(gg^\dag) = s$. 
  It also inherits the dagger from $\cat{C}$: if $\tr(ff^\dag)=s$, then also $\tr(f^\dag f)=\tr(ff^\dag)=s$.
  Consequently, $F(s)$ inherits the property of being an inverse category from $\cat{C}$.
  Moreover, $F(s)$ is a compact dagger category: the units and counits are given by $s \bullet \eta_A$ and $s \bullet \varepsilon_A$.
  Finally, scalars $x \in F(s)(I,I)$ are those scalars $x \in \cat{C}(I,I)$ satisfying $x^\dag x=s$, and form an abelian group with inverse $x^\dag$ and unit $s$: for $xs=xx^\dag x=x$; if $x^\dag x=s=y^\dag y$ then $(xy)^\dag (xy)=x^\dag x y^\dag y = s^\dag s = s$; and $xx^\dag=s$.
  Lemma~\ref{lem:cptgpd} therefore makes $F(s)$ a compact groupoid.
  Notice that $F$ is a well-defined functor: if $s \leq t$ and $\tr(ff^\dag)=t$, then $st=t$, so $\tr((sf)(sf)^\dag) = s s^\dag \tr(ff^\dag) = st=s$.

  Now consider morphisms. If $\cat{G} \colon \cat{C} \to \cat{C'}$ is a monoidal dagger functor, say with structure isomorphisms $\psi_0 \colon I' \to G(I)$ and $\psi_{A,B} \colon G(A) \otimes' G(B) \to G(A \otimes B)$, then it is easy to see that $\varphi$ is a semilattice homomorphism, and that $\theta_s$ is a well-defined monoidal dagger functor that is moreover natural in $s$, because monoidal functors preserve dual objects and hence traces.
  Finally, it is clear that the assignment $G \mapsto (\varphi,f)$ is functorial.
\end{proof}



Notice that $\cat{S}$ contains all \emph{dimension} scalars $\dim(A)=\tr(\id[A])$.

\begin{lemma}\label{lem:semilatofcptgpds}
  If $F \colon \cat{S} \to \cat{CptGpd}$ is a semilattice of compact groupoids, then the category $\cat{C}$ of Lemma~\ref{lem:semilatofgroupoids} is a compact inverse category, and this gives a functor $\cat{SLat}[\cat{CptGpd}] \to \cat{CptInvCat}$.
\end{lemma}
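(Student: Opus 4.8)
The plan is to equip the inverse category $\cat{C}$ of Lemma~\ref{lem:semilatofgroupoids} with a compact dagger structure whose dagger is the inverse-category dagger $f \mapsto f^{-1}$; since $\cat{C}$ is already an inverse category with that dagger, this makes it compact inverse by definition. Because the groupoids $F(s)$ share their objects and the transition functors $F(s \leq t)$ are identity on objects and (strong) monoidal, I take a single tensor product on objects, a common unit $I$, and common chosen duals $A^*$, and I take the coherence isomorphisms $\alpha, \lambda, \rho, \sigma$ together with the unit $\eta_A$ and counit $\varepsilon_A$ from the top groupoid $F(\top)$. On morphisms I define the tensor of $f \in F(s)(A,B)$ and $g \in F(t)(C,D)$ by pushing both down to the meet and tensoring there, $f \otimes g := F(s \wedge t \leq s)(f) \otimes F(s \wedge t \leq t)(g) \in F(s \wedge t)(A \otimes C, B \otimes D)$, exactly mirroring the definition of composition in $\cat{C}$.

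The heart of the argument, and the step I expect to be the main obstacle, is showing that this interleaving of the two operations ``push down to the meet'' and ``tensor'' is coherent. First I would check that $\otimes$ is a bifunctor: preservation of identities is immediate, and the interchange law reduces, after pushing every factor down to the common meet $v$ of all labels involved, to the interchange law inside the single groupoid $F(v)$, the essential input being that each $F(u \leq v)$ is monoidal, so that restricting a tensor agrees with tensoring the restrictions. The same compatibility drives naturality of the coherence isomorphisms across levels: to verify naturality of, say, $\alpha$ against morphisms $f, g, h$ at levels $s, t, u$, I push $\alpha^{F(\top)}$ down to $v = s \wedge t \wedge u$, use that $F(v \leq \top)$ is monoidal so that $F(v \leq \top)(\alpha^{F(\top)})$ is the associator of $F(v)$, and then invoke the genuine naturality of the associator inside the monoidal category $F(v)$.

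Once that compatibility is in hand, everything else reduces to identities already holding inside individual compact groupoids. The coherence axioms (pentagon, triangle, hexagon) and the snake equation~\eqref{eq:snake} involve only morphisms living in $F(\top)$ --- where composition and tensor in $\cat{C}$ coincide with those of $F(\top)$, since $\top \wedge \top = \top$ incurs no restriction --- so they follow at once from the corresponding identities in the compact dagger category $F(\top)$. The equations $(f \otimes g)^\dag = f^\dag \otimes g^\dag$ and the unitarity of each coherence isomorphism are inherited level-by-level, because the dagger of $\cat{C}$ is computed within each $F(s)$ and the transition functors are dagger functors. This establishes $\cat{C}$ as a compact dagger category, and hence a compact inverse category.

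For the functor $\cat{SLat}[\cat{CptGpd}] \to \cat{CptInvCat}$, I would take the dagger functor $\cat{C} \to \cat{C'}$ already produced by Lemma~\ref{lem:semilatofgroupoids} from a morphism $(\varphi, \theta) \colon F \to F'$ and check it is (strong) monoidal. Its structure isomorphisms are assembled from those of the component functors $\theta_s \colon F(s) \to F'(\varphi(s))$, which are morphisms of $\cat{CptGpd}$ and hence already (strong) monoidal; naturality of $\theta$ in $s$ makes these fit together across the levels, and preservation of duals follows since monoidal functors preserve dual objects and traces. Functoriality of the whole assignment on composites and identities is then routine.
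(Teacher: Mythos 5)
Your overall strategy is the paper's: take the tensor on objects, unit, coherence isomorphisms and duals from $F(\top)$, define the tensor of morphisms by pushing down to the meet, and reduce every axiom to a computation inside a single groupoid. But there is a genuine gap at the central definitional step. The morphisms of $\cat{CptGpd}$ are \emph{strong}, not strict, monoidal functors. So although all $F(s)$ have the same objects and the transition functors act as the identity on objects, each $F(s)$ carries its \emph{own} tensor operation $\otimes_s$ on objects, and $A \otimes_{s\wedge t} C$ need not equal $A \otimes C$ (the tensor of $F(\top)$, which you chose as the tensor of $\cat{C}$). Consequently your definition $f \otimes g := F(s \wedge t \leq s)(f) \otimes_{s\wedge t} F(s \wedge t \leq t)(g)$ produces a morphism $A \otimes_{s\wedge t} C \to B \otimes_{s\wedge t} D$, not a morphism $A \otimes C \to B \otimes D$: as written, the tensor on $\cat{C}$ does not typecheck. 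The paper repairs exactly this point by conjugating with the structure isomorphisms $\psi_{s\wedge t} \colon A \otimes_{s\wedge t} C \to A \otimes C$ supplied by the strong monoidal functor $F(s \wedge t \leq \top)$, defining
\[
  f \otimes g \;=\; \psi_{s\wedge t} \circ \big( F(s \wedge t \leq s)(f) \otimes_{s \wedge t} F(s \wedge t \leq t)(g) \big) \circ \psi_{s\wedge t}^{-1}
  \;\in\; F(s\wedge t)\big(A \otimes C, B \otimes D\big)\text.
\]

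The same implicit strictness assumption also infects your verification steps. The claims that ``restricting a tensor agrees with tensoring the restrictions'' and that ``$F(v \leq \top)(\alpha^{F(\top)})$ is the associator of $F(v)$'' are false on the nose for strong monoidal functors: the image of the associator is related to the associator of the target only through the coherence diagram involving the $\psi$'s, not by an equality. Once the $\psi$-conjugation is built into the definition of $\otimes$, these discrepancies cancel, and carrying that bookkeeping through interchange, naturality of $\alpha$, $\lambda$, $\rho$, $\sigma$, unitarity, and the snake equations is precisely the ``tedious but straightforward calculation'' the paper alludes to. So your argument would be complete if the transition functors were strict monoidal, but for the actual morphisms of $\cat{CptGpd}$ the missing $\psi$'s are not a cosmetic detail: without them the monoidal structure on $\cat{C}$ is not even well defined. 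Your final paragraph on morphisms of semilattices agrees with the paper and is fine, modulo the same caveat.
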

\begin{proof}
  Define the tensor product on objects on $\cat{C}$ as in $F(\top)$, and set the tensor unit $I$ in $\cat{C}$ to be that of $F(\top)$. 
  The fact that $F(s \leq \top)$ are monoidal functors gives structure isomorphisms $\psi_s \colon A \otimes_s B \to A \otimes B$, where we write $\otimes_s$ for the tensor product in $F(s)$, and $\psi \colon I_s \to I$, where we write $I_s$ for the tensor unit in $F(s)$.
  Define the tensor product of $f \in F(s)(A,B)$ and $g \in F(t)(C,D)$ to be
  \[
    \psi_{s\wedge t} \circ \big( F(s \wedge t \leq s)(f) \otimes_{s \wedge t} F(s \wedge t \leq t)(g) \big) \circ \psi_{s\wedge t}^{-1}
  \]
  in $F(s \wedge t)\big( A \otimes C, B \otimes D \big)$.
  Taking coherence isomorphisms and dual objects as in $F(\top)$, a tedious but straightforward calculation proves that the triangle and pentagon axioms are satisfied, that the snake equations are satisfied, and that $\cat{C}$ is a compact inverse category. 

  An even more tedious but still straightforward calculation shows that the functor induced by a morphism of semilattices of compact groupoids is monoidal.
\end{proof}

\begin{theorem}\label{thm:semilatofcptgpds}
  The functors of Proposition~\ref{prop:semilatofcptgpds} and Lemma~\ref{lem:semilatofcptgpds} implement an equivalence $\cat{CptInvCat} \simeq \cat{SLat}[\cat{CptGpd}]$.
\end{theorem}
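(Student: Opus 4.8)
The plan is to follow the template of the final paragraph of Theorem~\ref{thm:jarek}'s proof. Write $\Phi \colon \cat{CptInvCat} \to \cat{SLat}[\cat{CptGpd}]$ for the functor of Proposition~\ref{prop:semilatofcptgpds} and $\Psi$ for the functor of Lemma~\ref{lem:semilatofcptgpds}, and exhibit natural isomorphisms $\Psi\Phi \cong \id[\cat{CptInvCat}]$ and $\Phi\Psi \cong \id$ on $\cat{SLat}[\cat{CptGpd}]$. Functoriality of both was already settled in Proposition~\ref{prop:semilatofcptgpds} and Lemma~\ref{lem:semilatofcptgpds}, so all that remains is to compare the two round-trips on objects and to check these comparisons are natural.

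For $\Psi\Phi \cong \id$, I would start with a compact inverse category $\cat{C}$. By Proposition~\ref{prop:nonendo} every morphism satisfies $f=\tr(ff^\dag)\bullet f$, and $\tr(ff^\dag)$ is an idempotent scalar, so sending $f$ to $\tr(ff^\dag)$ partitions each homset as $\cat{C}(A,B)=\coprod_{s\in\cat{S}}F(s)(A,B)$. Hence $\Psi\Phi(\cat{C})$ has exactly the same objects and the same underlying morphisms as $\cat{C}$, and it only remains to see that the reassembled structure agrees with the original. The one computation that matters is composition: the composite of $f\in F(s)(A,B)$ and $g\in F(t)(B,C)$ in $\Psi\Phi(\cat{C})$ is $(st)\bullet(g\circ f)$, and sliding the idempotent scalars along the composite together with the identities $s\bullet f=f$ and $t\bullet g=g$ collapses this to $g\circ f$. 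Identities, daggers, and the tensor product are matched by the same bookkeeping, using that $\tr$ and $\bullet$ are compatible with the monoidal structure; naturality in $\cat{C}$ is immediate, since both functors act as $G$ on underlying morphisms.

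For $\Phi\Psi \cong \id$, I would start with a semilattice of compact groupoids $F\colon\cat{S}\op\to\cat{CptGpd}$, form $\cat{C}=\Psi(F)$, and let $G\colon\cat{T}\op\to\cat{CptGpd}$ be $\Phi(\cat{C})$, where $\cat{T}$ is the semilattice of idempotent scalars of $\cat{C}$. Since $\cat{C}(I,I)=\coprod_{s}F(s)(I,I)$ is, by Theorem~\ref{thm:jarek}, a commutative inverse monoid whose abelian-group components are the $F(s)(I,I)$, its idempotents are exactly the group units $1_s=\id[I]^{F(s)}$, one per $s\in\cat{S}$. This yields a semilattice isomorphism $\varphi\colon\cat{S}\to\cat{T}$, $s\mapsto 1_s$, with $\varphi(s\wedge t)=\varphi(s)\wedge\varphi(t)$ because $1_s\cdot 1_t=1_{st}$, the restriction functors being monoidal and unit-preserving. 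The crux is then the identity $\tr(ff^\dag)=\varphi(s)$ for every $f\in F(s)(A,B)$: granting it, we obtain $G(\varphi(s))(A,B)=\{\,f\mid\tr(ff^\dag)=\varphi(s)\,\}=F(s)(A,B)$ on the nose, so $\theta_s=\id$ supplies the natural isomorphism $F\To G\circ\varphi$.

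The main obstacle is precisely this trace identity $\tr(ff^\dag)=\varphi(s)$. Since $f$ is invertible in the compact groupoid $F(s)$ with $f^\dag=f^{-1}$, its product $ff^\dag$ computed in $\cat{C}$ is the $F(s)$-identity on $B$, so by Lemma~\ref{lem:endocollapse} applied inside $\cat{C}$ one has $\id[B]^{F(s)}=\tr(ff^\dag)\bullet\id[B]$; the content is that this scalar is $\varphi(s)=1_s$ independently of the object $B$. When $B\cong I$ in $F(s)$ this is clean, since cyclicity of trace gives $\tr(ff^\dag)=\tr(f^\dag f)=\tr(1_s)=1_s$; the work is to remove this hypothesis, which forces one to unwind how the tensor units $I_s$ and the structure maps $\psi_s$ of the $F(s)$ supplied by Lemma~\ref{lem:semilatofcptgpds} sit inside the compact structure of $\cat{C}$ inherited from $F(\top)$. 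A secondary, purely bureaucratic obstacle is that both round-trips must also be checked to respect the monoidal structure and the morphism-level data $(\varphi,\theta)$, not merely composition; these are the tedious but routine verifications already flagged in Lemma~\ref{lem:semilatofcptgpds}.
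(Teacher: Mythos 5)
Your overall strategy coincides with the paper's: two round trips, with Proposition~\ref{prop:nonendo} giving the homset decomposition $\cat{C}(A,B)=\coprod_s F(s)(A,B)$ in one direction, and the observation that the only idempotent of each group $F(s)(I,I)$ is its unit giving the semilattice isomorphism $\varphi\colon s\mapsto 1_s$ in the other; your scalar-sliding computation for composition is also correct. But the proposal has a genuine gap, and it is exactly the one you flag yourself: the identity $\tr(ff^\dag)=\varphi(s)$ for \emph{every} $f\in F(s)(A,B)$ is proved only when $B\cong I$. Without it you cannot conclude $G(\varphi(s))(A,B)=F(s)(A,B)$, so $\theta_s$ is not defined and the isomorphism $\Phi\Psi(F)\cong F$ --- hence the whole equivalence --- is not established. (This identity is what the paper compresses into the single sentence ``Because $F(t)$ is a groupoid, $s$ must be $t$''.) Moreover, the route you propose for closing it, unwinding how the units $I_s$ and the structure maps $\psi_s$ sit inside the compact structure inherited from $F(\top)$, is heavier than necessary and is not how the argument is most naturally finished.

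The identity follows in three short steps from facts you already have, with no coherence bookkeeping. Fix $f\in F(s)(A,B)$ and put $t=\tr(ff^\dag)\in\cat{C}(I,I)$. (i) The scalar $t$ lies in the component $F(s)(I,I)$ of $\cat{C}(I,I)=\coprod_u F(u)(I,I)$: indeed $f\circ f^\dag\in F(s\wedge s)(B,B)=F(s)(B,B)$, and the trace (including the dual $(-)^*$ hidden in the notation $\tr$) is built from $ff^\dag$ by tensoring and composing with morphisms of $F(\top)$ only (the $\eta$'s, $\varepsilon$'s, symmetries, and coherence maps), each of which keeps the result in $F(s\wedge\top)=F(s)$ by the very definitions of $\circ$ and $\otimes$ in Lemmas~\ref{lem:semilatofgroupoids} and~\ref{lem:semilatofcptgpds}. (ii) $t^\dag t=t$, by equation~\eqref{eq:snaketrace} from the proof of Proposition~\ref{prop:nonendo}, which applies because $\cat{C}=\Psi(F)$ is a compact inverse category by Lemma~\ref{lem:semilatofcptgpds}. (iii) The dagger of $\cat{C}$ on the component $F(s)$ is the groupoid inverse (Lemma~\ref{lem:semilatofgroupoids}), and $t^\dag$ and $t$ lie in the same component, so their composite involves no restriction functor: $t^\dag t=t^{-1}\circ t$ is the identity of the object $I$ in $F(s)$, that is, $\varphi(s)$. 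Combining (ii) and (iii) gives $t=\varphi(s)$, independently of $A$ and $B$. Inserting this at the point you flagged completes your proof, in agreement with the paper's.
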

\begin{proof}
  Starting with a compact inverse category $\cat{C}$, turning it into a semilattice of compact groupoids $F$, and turning that into compact inverse category again, results in the exact same compact inverse category $\cat{C}$.
  For example, the old homset $\cat{C}(A,B)$ equals the new homset $\coprod_{s \in \cat{C}(I,I) \mid ss^\dag=s} \{ f \in \cat{C}(A,B) \mid \tr(ff^\dag)=s \}$ because any morphism $f$ in $\cat{C}$ is of the form $s \bullet f$ for some scalar $ss^\dag=s=\tr(ff^\dag)$ by Proposition~\ref{prop:nonendo}.
  Similarly, the new tensor product of $f \in F(s)(A,B)$ and $g \in F(t)(C,D)$ is 
  \begin{align*}
    & \psi_{s \wedge t} \circ \big( F(s \wedge t \leq s)(f) \otimes F(s \wedge t \leq s)(g) \big) \circ \psi_{s \wedge t}^{-1} \\
    & = \psi_{s \wedge t} \circ (stf \otimes stg) \circ \psi_{s \wedge t}^{-1} \\
    & = \psi_{s \wedge t} \circ (st \bullet (f \otimes g)) \circ \psi_{s \wedge t}^{-1} \\
    & = (st \bullet (f \otimes g)) \circ \psi_{s \wedge t} \circ \psi_{s \wedge t}^{-1} \\
    & = (s \bullet f)\otimes (t \bullet g) \\
    & = f \otimes g\text,
  \end{align*}
  again by Proposition~\ref{prop:nonendo}, and because the natural isomorphism $\psi$ cooperates with unitors and hence scalar multiplication, 
  and so equals the old tensor product.

  Now start with a semilattice of compact groupoids $F \colon \cat{S}\op \to \cat{CptGpd}$. Lemma~\ref{lem:semilatofcptgpds} turns it into a compact inverse category $\cat{C}$, which in turn becomes the following semilattice of compact groupoids $G \colon \cat{T}\op \to \cat{CptGpd}$. The semilattice $\cat{T}$ is
  \begin{align*}
    \coprod_{s \in \cat{S}} \{t \in F(s)(I,I) \mid tt^\dag=t\}
    = \coprod_{s \in \cat{S}} \{ \id[I] \in F(s)(I,I) \}
  \end{align*}
  because each $F(s)$ is a groupoid, so $s \mapsto \id[I] \in F(s)(I,I)$ is a semilattice isomorphism $\varphi \colon \cat{S} \to \cat{T}$.
  The construction of Proposition~\ref{prop:semilatofcptgpds} gives $G(\varphi(s))$ the same objects as $F(\top)$.
  Morphisms $A \to B$ in $G(\varphi(s))$ are $f\colon A \to B$ in $F(t)(A,B)$ for some $t \in \cat{S}$ satisfying $\varphi(s)=\tr(ff^\dag)$. Because $F(t)$ is a groupoid, $s$ must be $t$, so $G(\varphi(s))$ and $F(t)$ have the exact same homsets and identities, and we may take $\theta$ to be the identity functor. Going through the construction of $G$ shows that $\theta$ is in fact a monoidal dagger functor. 
\end{proof}

\section{examples}\label{sec:examples}

This section lists examples of compact inverse categories $\cat{C}$. For each example we will indicate how Proposition~\ref{prop:semilatofcptgpds} works by writing $\cat{C}_0$ for the semilattice $\cat{S}$ and $\cat{C}_s$ for the compact groupoid $F(s)$.

\begin{example}[The fundamental compact groupoid]
  Any topological space $X$ with a fixed chosen point $x \in X$	gives rise to a compact groupoid $\cat{C}$:
  \begin{itemize}
  	\item The objects of $\cat{C}$ are paths from $x_0$ to $x_0$, more precisely, continuous functions $f \colon [0,1] \to X$ with $f(0)=f(1)=x$.
  	\item The arrows $f \to g$ are homotopy classes of paths, more precisely, continuous functions $h \colon [0,1]^2 \to X$ such that $h(s,0)=f(s)$, $h(s,1)=g(s)$, and $h(0,t)=h(1,t)=x_0$, where $h$ and $h'$ are identified when there is a continuous function $H \colon [0,1]^3 \to X$ with $H(s,t,0)=h(s,t)$, $H(s,t,1)=h'(s,t)$, $H(s,0,u)=f(s)$, $H(s,1,u)=g(s)$, and $H(0,t,u)=H(1,t,u)=x_0$.
  	\item The tensor product of objects is composition of paths according to some fixed reparametrisation, the tensor unit is the constant path. Reparametrisation leads to associators and unitors. 
  	\item Dual objects are given by reversal of paths.
  	\item The dagger is given by reversal of homotopies.
  	\item The unit $\eta_f$ is the ``birth of a double loop'', a homotopy that ``grows'' from the constant path to the path $f^\dag \circ f$ by travelling progressively further along $f$ before travelling back along $f^\dag$.
  	\item The counit $\varepsilon_f$ is the ``contraction of a double loop'', a homotopy that ``shrinks'' from the path $f^\dag \circ f$ to the constant path.
  \end{itemize}
  In this case $\cat{C}_0$ is a one-element semilattice, and $\cat{C}_s=\cat{C}$ is already a groupoid.
\end{example}

\begin{example}
  Any abelian group $\cat{C}$, considered as a discrete monoidal category, is a compact groupoid.
  In this case $\cat{C}_0$ is a one-element semilattice, and $\cat{C}_s=\cat{C}$ is already a groupoid.
\end{example}

\begin{lemma}
  If $\cat{C}$ is a compact (dagger/inverse) category, and $S$ a family of (dagger) idempotents, then $\mathrm{Split}_S(\cat{C})$ is again (dagger/inverse) compact.
\end{lemma}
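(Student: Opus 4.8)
The plan is to carry the structure of $\cat{C}$ across to $\mathrm{Split}_S(\cat{C})$ by conjugating everything with the splitting idempotents. Recall that objects of $\mathrm{Split}_S(\cat{C})$ are idempotents $e \in S$ (on objects $A$ of $\cat{C}$), a morphism $(A,e) \to (B,f)$ is a morphism $g \colon A \to B$ of $\cat{C}$ with $fge=g$, composition is inherited from $\cat{C}$, and the identity on $(A,e)$ is $e$. For the monoidal and compact structure to descend at all I would assume (as is surely intended) that $S$ contains $\id[I]$ and is closed under $\otimes$ and $(-)^*$; for dagger idempotents, closure under $(-)^\dag$ is then automatic.

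The dagger and inverse parts are the easy half. When every $e \in S$ satisfies $e=e^\dag$, I would take $g^\dag$ (the dagger computed in $\cat{C}$) as the dagger of $g \colon (A,e)\to(B,f)$; this is a legal morphism since $e g^\dag f = (fge)^\dag = g^\dag$, and it is visibly an involutive, identity-on-objects, contravariant functor. If $\cat{C}$ is moreover an inverse category, the defining equations $g = gg^\dag g$ and $gg^\dag hh^\dag = hh^\dag gg^\dag$ (for $g,h$ out of the same object) are equational and already hold in $\cat{C}$, so they hold in $\mathrm{Split}_S(\cat{C})$; hence it is again an inverse category.

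The real content is the compact structure. On objects I set $(A,e)\otimes(B,f)=(A\otimes B,\,e\otimes f)$, unit $(I,\id[I])$, and dual $(A,e)^*=(A^*,e^*)$; these are (dagger) idempotents since $(e\otimes f)^2=e\otimes f$, $(e^*)^2=(e^2)^*=e^*$, and $(e^*)^\dag=(e^\dag)^*=e^*$. On morphisms the tensor is inherited from $\cat{C}$ and is legal by functoriality of $\otimes$. The coherence isomorphisms and the $\eta,\varepsilon$ of $\cat{C}$ are not themselves morphisms of the splitting, so I would replace each by its conjugate, for instance the associator by $(e\otimes(f\otimes g))\circ\alpha\circ((e\otimes f)\otimes g)$ and the unit at $(A,e)$ by $\eta'=(e^*\otimes e)\circ\eta_A$. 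Naturality of the original structure maps turns these conjugates into legal morphisms and into mutually inverse pairs whose composites collapse, by idempotence, to the relevant identities $e\otimes f$, and so on.

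Finally I would check the triangle, pentagon, symmetry and snake axioms. After inserting the idempotents each reduces to the corresponding axiom of $\cat{C}$ conjugated by idempotents, sliding idempotents past coherence maps by naturality so that the surviving idempotent is exactly the identity of the object in question. I expect the snake equation to be the main obstacle: one must verify that the expression $\lambda\circ(\varepsilon'\otimes\id)\circ\alpha\circ(\id\otimes\eta')\circ\rho^{-1}$ built from the conjugated $\eta',\varepsilon'$ evaluates to the splitting identity $e$ rather than to $\id[A]$. This follows from the snake equation in $\cat{C}$ together with the sliding identity $(\id\otimes e)\circ\eta_A=(e^*\otimes\id)\circ\eta_A$ and its analogue $\varepsilon_A\circ(e\otimes\id)=\varepsilon_A\circ(\id\otimes e^*)$ for the counit, which let the two copies of $e$ created by the snake merge into the single idempotent $e$. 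In the dagger case one checks in addition that $\varepsilon'=\sigma\circ(\eta')^\dag$, which is immediate from $e=e^\dag$ and $(e^*)^\dag=e^*$.
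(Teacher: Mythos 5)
Your proposal is correct and follows essentially the same route as the paper's own proof: the dagger and inverse structure are inherited verbatim, and compactness is obtained by conjugating the unit and counit with the idempotents, $\eta' = (e^* \otimes e) \circ \eta_A$ and $\varepsilon' = \varepsilon_A \circ (e \otimes e^*)$, with the snake equation for the splitting identity $e$ following from the snake in $\cat{C}$ plus the sliding identities. You are in fact more explicit than the paper, which leaves the closure assumptions on $S$, the tensor of objects, and the conjugated coherence isomorphisms implicit.
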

\noindent In terms of Theorem~\ref{thm:semilatofcptgpds}, $\mathrm{Split}_S(\cat{C})_0 \simeq \cat{C}_0$, and $\mathrm{Split}_S(\cat{C})_s = \mathrm{Split}_{S_s}(\cat{C}_s)$, where $S_s = \{ p \in S \mid \tr(p)=s \}$.
\begin{proof}
  Let $p \colon A \to A$ be in $S$.
  Define $\eta_p = (p^* \otimes p) \circ \eta_A \colon \id[I] \to p \otimes p^*$ and $\varepsilon_p = \varepsilon_A \circ (p \otimes p^*) \colon p^* \otimes p \to \id[I]$; these are well-defined morphisms in $\mathrm{Split}_S(\cat{C})$.
  Then indeed the snake equations hold: $p = (\varepsilon_A \otimes p) \circ (p \otimes p^* \otimes p) \circ (p \otimes \eta_A) = (\varepsilon_p \otimes p) \circ (p \otimes \eta_p)$.
  If $\cat{C}$ has a dagger, then so does $\mathrm{Split}_S(\cat{C})$, and $\eta_p = (\varepsilon_p \circ \sigma)^\dag$.
\end{proof}


\begin{example}
  If $\cat{C}$ and $\cat{D}$ are compact inverse categories, then so is $\cat{C} \times \cat{D}$.
  In this case $(\cat{C} \times \cat{D})_0 \simeq \cat{C}_0 \times \cat{D}_0$, and
  $(\cat{C} \times \cat{D})_{(s,t)} = \cat{C}_s \times \cat{D}_t$.
  If $\cat{C}$ and $\cat{D}$ are compact groupoids, then so is $\cat{C} \times \cat{D}$.
\end{example}

\begin{example}
  If $\cat{C}$ is a compact inverse category, and $\cat{G}$ is a groupoid, then $[\cat{G},\cat{C}]_\dag$, the category of functors $F \colon \cat{G} \to \cat{C}$ satisfying $F(f^{-1})=F(f)^\dag$ and natural transformations, is again a compact inverse category. 
\end{example}
\noindent
In this case $([\cat{G},\cat{C}]_\dag)_0 \simeq \cat{C}_0$, and
$([\cat{G},\cat{C}]_\dag)_s$ has as morphisms natural transformations whose every component is in $\cat{C}_s$.
\begin{proof}
  If $\alpha \colon F \Rightarrow G$ is a natural transformation, its dagger is given by $(\alpha^\dag)_A = (\alpha_A)^\dag \colon G(A) \to F(A)$; naturality of $\alpha^\dag$ follows from naturality of $\alpha$ together with the conditions $F(f)^\dag=F(f^{-1})$ and $G(f)^\dag=G(f^{-1})$. This makes $[\cat{G},\cat{C}]_\dag$ into a dagger category. 
  It inherits the property $\alpha=\alpha \alpha^\dag \alpha$ componentwise from $\cat{C}$, and is therefore an inverse category.

  The tensor product of objects is given by $(F \otimes G)(A) = F(A) \otimes G(A)$, and on morphisms by $(F \otimes G)(f) = F(f) \otimes G(f)$. The tensor unit is the functor that is constantly $I$. Because the coherence isomorphisms in $\cat{C}$ are unitary, this makes $[\cat{G},\cat{C}]_\dag$ into a well-defined dagger symmetric monoidal category.

  Finally, the dual object of $F \colon \cat{G} \to \cat{C}$ is given by $F^*(A)=F(A)^*$ and $F^*(f)=F(f)_*$. The unit $\eta_F \colon I \Rightarrow F^* \otimes F$ is given by $(\eta_F)_A = \eta_{F(A)}$, and the counit by $(\varepsilon_F)_A=\varepsilon_{F(A)}$. These are natural because any morphism $f \colon A \to B$ in $\cat{G}$ satisfies $ff^\dag = \id[A]$, whence $(F(f)_* \otimes F(f)) \circ \eta_{F(A)} = (\id[B^*] \otimes f) \circ (\id[B^*] \otimes f^\dag) \circ \eta_{F(B)} = \eta_{F(B)}$. This makes $[\cat{G},\cat{C}]_\dag$ a compact inverse category.
\end{proof}

\section{Compact groupoids}\label{sec:compactgroupoids}

This section moves to a 2-categorical perspective, to connect to a characterisation of compact groupoids.
A compact groupoid is the same thing as a \emph{coherent 2-group}~\cite{baezlauda:2groups}.
It is also known as a \emph{crossed module}.
Compact groupoids are classified by two abelian groups $G$ and $H$ and an element of the third cohomology group of $G$ with coefficients in $H$, as worked out by Baez and Lauda~\cite{baezlauda:2groups}. The following proposition makes this more precise. In the nonsymmetric case, $G$ need not be abelian, and there is an additional action of $G$ on $H$.

\begin{proposition}
  A compact groupoid $G$ is, up to equivalence, defined by the following data:
  \begin{itemize}
  \item the (abelian) group $G$ of isomorphism classes of objects of $\cat{C}$, under $\otimes$, with unit $I$, and inverse given by dual objects;
  \item the abelian group $H$ of scalars $\cat{C}(I,I)$ under composition with unit $\id[I]$ and inverse $\dag$;
  \item the conjugation action $G \times H \to H$ that takes $(A,s)$ to $\tr(A \otimes s)=s$;
  \item the 3-cocycle $G \times G \times G \to H$ that takes $(A,B,C)$ to $\Tr(\alpha_{A,B,C})$.
  \end{itemize}
\end{proposition}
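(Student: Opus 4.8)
The plan is to recognise a compact groupoid as a coherent 2-group and then specialise the Baez--Lauda classification~\cite{baezlauda:2groups}, reading off each of the four pieces of data explicitly. First I would verify that a compact groupoid really is a coherent 2-group, that is, a monoidal groupoid in which every object is invertible up to isomorphism. This is exactly where compactness is used: each object $A$ carries $\eta_A \colon I \to A^* \otimes A$ and $\varepsilon_A \colon A \otimes A^* \to I$, and since every morphism of a groupoid is invertible these are isomorphisms, so that $A \otimes A^* \cong I \cong A^* \otimes A$ and $A^*$ serves as an inverse for $A$ under $\otimes$. The monoidal product therefore descends to a group operation on isomorphism classes.

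Granting the 2-group structure, the four items are the standard invariants $\pi_0$, $\pi_1$, the action, and the associator class. I would identify $G$ as the group of isomorphism classes of objects under $\otimes$, with unit $[I]$ and inverse $[A^*]$, the group axioms following from monoidal coherence together with the invertibility just established. I would identify $H$ as $\cat{C}(I,I)$: every scalar is invertible because the category is a groupoid, and scalars commute because the endohomset of the unit is always a commutative monoid~\cite{abramsky:scalars}, so that $H$ is abelian under composition. For the action of $G$ on $H$ I would compute the conjugation of a scalar $s$ by an object $A$, transporting $s$ through the coherence and braiding isomorphisms, and show that symmetry makes it trivial, returning $s$; the swap $\sigma$ is precisely what slides $s$ past $A$. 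This is the step singled out by the closing remark: without symmetry the action need not be trivial.

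The substance of the proof is the 3-cocycle $(A,B,C) \mapsto \Tr(\alpha_{A,B,C})$. I would first check that it is well defined on isomorphism classes, using naturality of $\alpha$ together with cyclicity of the trace, so that replacing $A$, $B$, $C$ by isomorphic objects leaves the traced scalar unchanged. The cocycle identity itself should then fall out of the pentagon axiom, applied under $\Tr$. The main obstacle will be this last passage: one must confirm that no information is lost in collapsing the associator $\alpha_{A,B,C}$ to a single scalar, which is guaranteed by Lemma~\ref{lem:endocollapse} since every endomorphism is already a scalar multiple of the identity, and then match the traced pentagon to the precise normalisation of the Baez--Lauda cocycle, keeping track of the dimension factors $\dim(A)=\tr(\id[A])$ that the trace introduces. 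The remaining verifications, that this data reconstructs the compact groupoid up to monoidal equivalence, are routine given Baez--Lauda.
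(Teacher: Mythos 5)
Your overall strategy is the paper's: recognise a compact groupoid as a coherent 2-group and specialise the Baez--Lauda classification, with the pentagon supplying the cocycle identity and the symmetry trivialising the action. But there is a concrete gap in how you extract the cocycle. You propose to define $(A,B,C)\mapsto\Tr(\alpha_{A,B,C})$ on the category as given and then ``check that it is well defined on isomorphism classes, using naturality of $\alpha$ together with cyclicity of the trace''. This step cannot be carried out as stated: $\alpha_{A,B,C}$ is a morphism $(A\otimes B)\otimes C\to A\otimes(B\otimes C)$ between objects that are isomorphic but in general distinct, so it is not an endomorphism and $\Tr(\alpha_{A,B,C})$ does not even typecheck. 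To produce a scalar you must first compose with some isomorphism $\phi\colon A\otimes(B\otimes C)\to(A\otimes B)\otimes C$, and the result genuinely depends on $\phi$: two choices differ by an automorphism, and by Lemmas~\ref{lem:endocollapse} and~\ref{lem:cptgpd} every automorphism is an invertible scalar times the identity, so $\Tr(\phi\circ\alpha_{A,B,C})$ is only determined up to an arbitrary element of $H$. This ambiguity is precisely the coboundary ambiguity of the cocycle, and no amount of naturality or cyclicity will remove it. The missing ingredient is the normalisation move that constitutes the paper's ``trick'', following Section~8 of~\cite{baezlauda:2groups}: first replace $\cat{C}$ by a skeletal model, then adjust the tensor product so that all unitors, units and counits (but not the associators) are identities. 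Only in that model is $\alpha_{A,B,C}$ an endomorphism, $\Tr(\alpha_{A,B,C})$ a well-defined element of $H$, and the pentagon literally the (normalised) 3-cocycle identity. Your appeal to Lemma~\ref{lem:endocollapse} to argue that no information is lost is correct, but it too presupposes this skeletal setting, since that lemma applies only to endomorphisms.

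A smaller point: your worry about ``keeping track of the dimension factors $\dim(A)=\tr(\id[A])$'' is vacuous. In any compact inverse category the dimensions lie in the semilattice $\cat{S}$ of idempotent scalars, and in a compact groupoid the scalars are invertible, so the only idempotent is $\id[I]$; hence $\dim(A)=\id[I]$ for every object. (Equivalently, apply Lemma~\ref{lem:endocollapse} to $\id[A]$ and take traces to see $\tr(\id[A])$ is idempotent.) So the trace introduces no correction factors, and the traced pentagon matches the Baez--Lauda cocycle on the nose once the unitors have been strictified.
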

The above data form the objects of a (weak) 2-category $\cat{Cocycle}$, with 1- and 2-cells as in~\cite[Theorem~43]{baezlauda:2groups}.
\begin{proof}[Proof sketch]
  See~\cite[Section~8]{baezlauda:2groups}.
  The trick is the following. First, we may assume that $\cat{C}$ is skeletal.
  Then, we may adjust the tensor product such that all unitors and units and counits (but not the associators!) are identities.
  The pentagon equation ensures that the trace of the associator is in fact a 3-cocycle.
\end{proof}

The proof of Theorem~\ref{thm:semilatofcptgpds} is the only place where we have used that in a semilattice $F$ of categories all $F(s)$ must have the same objects. It was needed because if the functor $\theta_s$ is to be an isomorphism, it must give a bijection between the objects of $F(s)$ and $F(\top)$. 
We now move to a (weak) 2-categorical perspective to remove this restriction.

\begin{definition}\label{def:slat2}
  Redefine the category $\cat{SLat}[\cat{V}]$ of Definition~\ref{def:slat} to become a (weak) 2-category as follows:
  \begin{itemize}
    \item 0-cells are functors $F \colon \cat{S}\op \to \cat{Cat}$ for some semilattice $\cat{S}$;
    \item 1-cells $F \to F'$ consist of a morphism $\varphi \colon \cat{S} \to \cat{S'}$ of semilattices and a natural transformation $\theta \colon F \To F' \circ \varphi$;
    \item 2-cells $(\varphi,\theta) \to (\varphi',\theta')$ exist when $\varphi \leq \varphi'$ and then are natural transformations $\gamma \colon \theta \TO \theta' \circ (\id * (\varphi \leq \varphi'))$.
  \end{itemize}
  Composition is by pasting.
  \[\begin{tikzpicture}
    \node (J) at (0,1) {$\cat{S}\op$};
    \node (J') at (0,-1) {$\cat{S'}\op$};
    \node (C) at (5,0) {$\cat{V}$};
    \draw[->] (J) to[out=0,in=90,looseness=.5] node[above]{$F$} (C);
    \draw[->] (J') to[out=0,in=-90,looseness=.5] node[below]{$F'$} (C);
    \draw[->] (J) to[out=-120,in=120] node[left]{$\varphi$} (J');
    \draw[->] (J) to[out=-60,in=60] node[right]{$\varphi'$} (J');
    \node at (0,0) {$\leq$};
    \draw[twocell] (3,.5) to[out=-60,in=60] node[right]{$\theta'$} (3,-.5);
    \draw[twocell] (2,.5) to[out=-120,in=120] node[left]{$\theta$} (2,-.5);
    \draw[threecell] (2.2,0) to node[above]{$\gamma$} (2.8,0);
  \end{tikzpicture}\]
  Write $\cat{SLat}_=[\cat{CptGpd}]$ for the full sub-2-category where all categories $F(s)$ have the same objects.
\end{definition}

To be precise, in $\cat{SLat}[\cat{CptGpd}]$, 2-cells $\gamma$ are modifications: for each $s \in \cat{S}$ and $A \in F'(\varphi(s))$, there is a morphism $\gamma_{s,A} \colon \theta_s(A) \to \theta'_s\big(F'\big(\varphi(s) \leq \varphi'(s)\big)(A)\big)$ that is natural in $s$ as well as $A$.

\begin{lemma}\label{lem:samediff}
  There is a (weak) 2-equivalence $\cat{SLat}[\cat{CptGpd}] \simeq \cat{SLat}_=[\cat{CptGpd}]$.
\end{lemma}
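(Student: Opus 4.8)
The plan is to realise the evident inclusion 2-functor $\iota \colon \cat{SLat}_=[\cat{CptGpd}] \hookrightarrow \cat{SLat}[\cat{CptGpd}]$ as the claimed (weak) 2-equivalence. Since $\cat{SLat}_=[\cat{CptGpd}]$ is by definition a \emph{full} sub-2-category, the inclusion is locally an isomorphism of hom-categories: for diagrams $F,F'$ with the same-objects property, every 1-cell and every 2-cell between them already lies in $\cat{SLat}_=[\cat{CptGpd}]$. Hence $\iota$ is automatically locally fully faithful and locally essentially surjective, and by the standard characterisation of (weak) 2-equivalences it remains only to prove that $\iota$ is biessentially surjective on 0-cells: every $F \colon \cat{S}\op \to \cat{CptGpd}$ is equivalent, inside $\cat{SLat}[\cat{CptGpd}]$, to a diagram $\tilde F$ all of whose values share a single object set.

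To build $\tilde F$ I would first fix the common object set $O = \coprod_{u \in \cat{S}} \mathrm{ob}\,F(u)$, writing its elements as pairs $(u,X)$. For each $s$ I would define $\pi_s \colon O \to \mathrm{ob}\,F(s)$ by $\pi_s(u,X) = F(s \leq u)(X)$ when $s \leq u$ and $\pi_s(u,X) = I$ (the tensor unit) otherwise, and let $\tilde F(s)$ be the inverse-image groupoid with objects $O$ and homsets $\tilde F(s)\big((u,X),(v,Y)\big) = F(s)\big(\pi_s(u,X),\pi_s(v,Y)\big)$, with composition inherited from $F(s)$. Because $\pi_s$ restricts to the identity on the summand $\mathrm{ob}\,F(s)$, the induced functor $\pi_s \colon \tilde F(s) \to F(s)$ is full, faithful, and essentially surjective, hence an equivalence; transporting the symmetric monoidal, dagger, and compact structure of $F(s)$ along $\pi_s$ (a routine but tedious exercise) makes each $\tilde F(s)$ a compact groupoid and each $\pi_s$ a (strong) monoidal dagger equivalence.

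The delicate point is to assemble the $\tilde F(s)$ into an honest (strict) functor with constant object set, so I would define the restriction functors by the ``collapse'' formula $\tilde F(s \leq t)(u,X) = \big(s,\,F(s\leq t)(\pi_t(u,X))\big)$ for $s < t$, and $\tilde F(s\leq s) = \mathrm{id}$. The key computation is $\pi_s(s,Z) = Z$: this simultaneously forces the naturality square $\pi_s \circ \tilde F(s\leq t) = F(s\leq t)\circ\pi_t$ to commute on the nose and makes the collapse formulas compose correctly along chains $r \leq s \leq t$, while the identity override stays consistent because composing a collapse with an identity reproduces the collapse. Thus $\pi = (\pi_s)_s$ is a strict natural transformation $\tilde F \To F$ whose components are equivalences. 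Dually, the level-$s$ embeddings $\bar\pi_s \colon F(s)\to\tilde F(s)$, $X \mapsto (s,X)$, assemble into a strict natural transformation $\bar\pi \colon F \To \tilde F$ with $\pi_s\bar\pi_s = \mathrm{id}$ on the nose and a canonical natural isomorphism $\bar\pi_s\pi_s \cong \mathrm{id}$, so that $(\mathrm{id},\pi)$ and $(\mathrm{id},\bar\pi)$ exhibit $F$ and $\tilde F \in \cat{SLat}_=[\cat{CptGpd}]$ as equivalent in $\cat{SLat}[\cat{CptGpd}]$.

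Combining biessential surjectivity with the local isomorphism of hom-categories then yields the (weak) 2-equivalence. I expect the genuine obstacle to be exactly the tension between ``constant object set'' and ``strict functoriality'': a constant-object diagram must send each identity $s \leq s$ to the identity, yet the only object one can canonically manufacture in $F(s)$ out of data sitting at an incomparable level is a unit, so \emph{no} strictly natural, pointwise-essentially-surjective comparison can have identity-on-objects restriction functors; the collapse formula is the device that circumvents this. A secondary point, already flagged in the text, is that $O$ is a genuine coproduct over all of $\cat{S}$ and so may be large -- this is the one place where size hypotheses are needed.
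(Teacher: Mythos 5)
Your proposal is correct, and its overall skeleton is the same as the paper's: since $\cat{SLat}_=[\cat{CptGpd}]$ is a full sub-2-category, the inclusion is locally an isomorphism of hom-categories, so everything reduces to biessential surjectivity, i.e.\ to replacing an arbitrary $F \colon \cat{S}\op \to \cat{CptGpd}$ by an internally equivalent diagram whose levels all share one object set. Where you genuinely diverge is in how that diagram is built. The paper pads: it takes $\kappa$ to be the maximum of the cardinalities $\kappa_s$ of the object sets, adjoins $\kappa$ isomorphic copies of the tensor unit to each $F(s)$, and relabels all object sets by ordinals; the levelwise monoidal equivalences are then immediate. You instead take the disjoint union $O$ of all the object sets and pull each $F(s)$ back along the projection $\pi_s$, defining the restriction functors by your collapse formula. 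Your version is longer, but it makes explicit exactly what the paper leaves implicit: the paper never says what the padded diagram's functors $G(s\leq t)$ do on the adjoined copies of $I$, nor re-checks functoriality after the relabelling, whereas your computation $\pi_s(s,Z)=Z$ does this bookkeeping and even yields strict naturality of $\pi$ and $\bar\pi$. Conversely, the paper's construction is shorter and keeps each level as small as possible; it does have a small wrinkle your construction sidesteps, namely that when every $\kappa_s$ is finite the padded object sets have cardinality $\kappa_s+\kappa$, which still depends on $s$, so the padding should really be `up to a common cardinality' rather than `by $\kappa$ copies'. Both routes hit the same size caveat (a supremum of cardinals versus a coproduct over all of $\cat{S}$), which is exactly why the paper flags this lemma in Section~2 as the one place where size matters.
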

\begin{proof}
  First, observe that two 0-cells $F,G \colon \cat{S}\op \to \cat{CptGpd}$ are equivalent in $\cat{SLat}[\cat{CptGpd}]$ exactly when there is a natural monoidal equivalence $F(s) \simeq G(s)$. 
  Therefore, it suffices to construct, for each $F$, such a $G$ such that each $G(s)$ has the same objects.
  Let $\kappa_s$ be the cardinality of the objects of $F(s)$, and let $\kappa$ be the maximum of all $\kappa_s$. 
  Define $G(s)$ to be equal to $F(s)$, except that we add $\kappa$ isomorphic copies of the tensor unit $I$.
  There is an obvious monoidal structure on $G(s)$, and by construction there is a monoidal equivalence $F(s) \simeq G(s)$, so that $G(s)$ is automatically a compact groupoid.
  We may furthermore relabel the objects of $G(s)$ to be ordinal numbers, so that all $G(j)$ have the same objects.
\end{proof}

\begin{theorem}
  There is a (weak) 2-equivalence $\cat{CptInvCat} \simeq \cat{SLat}[\cat{Cocycle}]$, where $\cat{CptInvCat}$ has natural transformations as 2-cells.  
\end{theorem}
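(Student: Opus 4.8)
The plan is to assemble the final 2-equivalence by composing three equivalences that have essentially already been established or are within reach. Reading right-to-left from the target, I would factor the claimed equivalence as
\[
  \cat{CptInvCat} \simeq \cat{SLat}_=[\cat{CptGpd}] \simeq \cat{SLat}[\cat{CptGpd}] \simeq \cat{SLat}[\cat{Cocycle}]\text.
\]
The first of these is Theorem~\ref{thm:semilatofcptgpds}, promoted to a 2-equivalence: the functors of Proposition~\ref{prop:semilatofcptgpds} and Lemma~\ref{lem:semilatofcptgpds} already implement an ordinary equivalence, and one checks that they are 2-functorial with respect to the natural-transformation 2-cells on $\cat{CptInvCat}$ and the modification 2-cells of Definition~\ref{def:slat2}. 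The second equivalence is precisely Lemma~\ref{lem:samediff}. So the genuinely new content lies in the third factor.

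For the third factor I would argue that the Baez--Lauda classification recalled in the preceding proposition upgrades from an object-level statement to a full 2-equivalence $\cat{CptGpd} \simeq \cat{Cocycle}$, and then show that applying a 2-equivalence pointwise (``post-composition'') induces a 2-equivalence $\cat{SLat}[\cat{CptGpd}] \simeq \cat{SLat}[\cat{Cocycle}]$. The first half is cited to \cite[Theorem~43]{baezlauda:2groups}, which gives the 1- and 2-cells of $\cat{Cocycle}$ exactly so that the assignment sending a skeletal compact groupoid to its pair of abelian groups, conjugation action, and 3-cocycle is a biequivalence of bicategories. The second half is the general principle that whiskering a semilattice diagram $F\colon \cat{S}\op \to \cat{CptGpd}$ by a 2-functor $E\colon \cat{CptGpd}\to\cat{Cocycle}$ sends 0-cells to 0-cells, 1-cells $(\varphi,\theta)$ to $(\varphi, E*\theta)$, and 2-cells (modifications) $\gamma$ to $E*\gamma$, and that a pointwise 2-equivalence is reflected by this construction because equivalences, adjoint data, and invertibility of 2-cells are all detected stalkwise over each $s \in \cat{S}$.

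Concretely I would carry out the steps in the following order. First, verify that the two constructions of Theorem~\ref{thm:semilatofcptgpds} respect 2-cells, so that it becomes a 2-equivalence $\cat{CptInvCat}\simeq\cat{SLat}_=[\cat{CptGpd}]$; here I must confirm that a natural transformation between monoidal dagger functors $\cat{C}\to\cat{C'}$ corresponds exactly to a modification $\gamma$ as spelled out after Definition~\ref{def:slat2}, using that the component $\gamma_{s,A}$ is forced to land in the homset indexed by the appropriate scalar. Second, invoke Lemma~\ref{lem:samediff} to pass to $\cat{SLat}[\cat{CptGpd}]$. Third, establish the pointwise-postcomposition 2-equivalence $\cat{SLat}[\cat{CptGpd}]\simeq\cat{SLat}[\cat{Cocycle}]$ from the Baez--Lauda biequivalence $\cat{CptGpd}\simeq\cat{Cocycle}$. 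Finally, compose.

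The main obstacle I expect is the coherence bookkeeping in the third step, namely that $\cat{Cocycle}$ is only a \emph{weak} 2-category and the Baez--Lauda correspondence is a biequivalence rather than a strict 2-equivalence, so post-composition must be handled up to coherent invertible modifications rather than on the nose. In particular, choosing a skeleton and normalising unitors (as in the proof sketch of the preceding proposition) introduces associator data whose trace is the 3-cocycle, and transporting a \emph{diagram} $F\colon\cat{S}\op\to\cat{CptGpd}$ across this choice requires that the restriction functors $F(s\leq t)$ be compatible with the chosen skeleta naturally in $s$. Verifying that these choices can be made coherently over the whole semilattice --- so that the induced $G\times G\to G$, $G\times H\to H$, and $3$-cocycle vary functorially in $s$ and the 2-cells remain modifications --- is where the real work sits; the rest is a formal composition of 2-equivalences.
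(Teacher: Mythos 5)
Your proposal is correct and takes essentially the same route as the paper: the paper's proof composes exactly the same three 2-equivalences, namely $\cat{CptInvCat}\simeq\cat{SLat}_=[\cat{CptGpd}]$ from Theorem~\ref{thm:semilatofcptgpds} (remarking that it still holds after the change of Definition~\ref{def:slat2}), $\cat{SLat}_=[\cat{CptGpd}]\simeq\cat{SLat}[\cat{CptGpd}]$ from Lemma~\ref{lem:samediff}, and $\cat{SLat}[\cat{CptGpd}]\simeq\cat{SLat}[\cat{Cocycle}]$ induced by postcomposition with the Baez--Lauda 2-equivalence $\cat{CptGpd}\simeq\cat{Cocycle}$. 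You merely spell out the 2-functoriality and coherence bookkeeping that the paper leaves implicit.
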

\begin{proof}
  The (weak) 2-equivalence $\cat{CptGpd}\simeq \cat{Cocycle}$ of~\cite[Theorem~43]{baezlauda:2groups} induces a (weak) 2-equivalence $\cat{SLat}[\cat{CptGpd}] \simeq \cat{SLat}[\cat{Cocycle}]$ by postcomposition.
  Combine this with the equivalence $\cat{SLat}_=[\cat{CptGpd}] \simeq \cat{SLat}[\cat{CptGpd}]$ of Lemma~\ref{lem:samediff} and the equivalence $\cat{CptInvCat} \simeq \cat{SLat}_=[\cat{CptGpd}]$ of Theorem~\ref{thm:semilatofcptgpds}; the latter still holds after the change of Definition~\ref{def:slat2}.
\end{proof}

\section{Concluding remarks}\label{sec:conclusion}

We conclude by discussing the many questions left open and raised in this paper.
First, one could investigate generalising the results in this paper from categories to semicategories.
Second, one could investigate generalising the results in this paper from compact categories to monoidal categories where every object has a dual.

\subsection{Traced inverse categories}

Inverse categories provide semantics for reversible programs, but higher-order aspects of reversible programming remain unclear. Compact categories are closed and hence provide semantics for higher-order programming. Theorem~\ref{thm:semilatofcptgpds} shows that compact inverse categories are, in a sense, degenerate. But one of the most interesting aspects of higher-order programming, tail recursion, doesn't need compact categories for semantics, and can already be modeled in traced monoidal categories. (But see also~\cite{kaarsgaardaxelsengluck:joininversecats}.) Now every traced monoidal category can be monoidally embedded in a compact category~\cite{joyalstreetverity:traced}. One can prove that there exists a left dagger biadjoint to the forgetful functor from dagger compact categories to dagger traced categories. There is also a left adjoint to the forgetful functor from compact inverse categories to compact dagger categories, but the latter is not faithful. Hence there is a left dagger biadjoint to the forgetful functor from compact inverse categories to traced inverse categories, but its unit does not embed any traced inverse category into a compact inverse category. Therefore Theorem~\ref{thm:semilatofcptgpds} does not show that all traced inverse categories degenerate. Indeed, the category $\cat{PInj}$ of sets and injections is the universal inverse category~\cite{kastl:inversecategories}, and is also traced~\cite{hines:thesis,haghverdiscott:goi}, but it fails Lemma~\ref{lem:endocollapse}, irrespective of which tensor product it carries, as the swap map on the two element set is not a scalar multiple of the identity. That leaves a valid question: what do traced inverse categories look like?

\subsection{Idempotents}

A \emph{subunit} in a monoidal category $\cat{C}$ is a subobject $r \colon R \rightarrowtail I$ for which $r \otimes \id[R]$ is invertible~\cite{enriquemolinerheunentull:tensortopology}; they form a semilattice $\mathrm{ISub}(\cat{C})$. The following lemma shows that in compact inverse categories, up to splitting idempotents, the semilattice $\cat{C}_0$ is precisely that of subunits. See also~\cite{schwabschwab:idempotents} for structure theorems of inverse categories in which all idempotents split.

\begin{lemma}
  Let $\cat{C}$ be a compact inverse category.
  \begin{enumerate}
    \item[(a)] A map $r \colon R \to I$ is a subunit if and only if $r^\dag r = \id$.
    \item[(b)] Any subunit $r$ induces an element $rr^\dag$ of $\cat{C}_0$.
    \item[(c)] If idempotents split in $\cat{C}$, any element $\cat{C}_0$ is $rr^\dag$ for a unique subunit $r$; this gives a isomorphism between the semilattices $\cat{C}_0$ and $\mathrm{ISub}(\cat{C})$.
  \end{enumerate}
\end{lemma}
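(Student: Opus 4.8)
The plan is to treat the three items in order, handling (a) and (b) directly and deducing (c) by exhibiting $r \mapsto rr^\dag$ as an order isomorphism.

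For the forward direction of (a), I would use only that a subunit is a monomorphism. Writing $p = r^\dag r$, the inverse-category axiom gives $r \circ p = rr^\dag r = r = r \circ \id[R]$, so cancelling the monomorphism $r$ yields $r^\dag r = \id[R]$; the invertibility of $r \otimes \id[R]$ is not needed here. Item (b) is then immediate, and in fact holds for every $r \colon R \to I$: the scalar $rr^\dag$ is self-adjoint, and $rr^\dag rr^\dag = (rr^\dag r)r^\dag = rr^\dag$ shows it is idempotent, so $rr^\dag \in \cat{C}_0$.

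For the converse in (a), I would suppose $r^\dag r = \id[R]$, so that $r$ is split monic with retraction $r^\dag$, and show that $r^\dag \otimes \id[R]$ is a two-sided inverse of $r \otimes \id[R]$. One composite is immediate: $(r^\dag \otimes \id[R]) \circ (r \otimes \id[R]) = (r^\dag r) \otimes \id[R] = \id[R \otimes R]$. The other composite is $(rr^\dag) \otimes \id[R]$, and the key point is that this leftover scalar collapses: by Lemma~\ref{lem:endocollapse} and cyclicity of the trace, $rr^\dag = \tr(rr^\dag) \bullet \id[I] = \tr(r^\dag r) \bullet \id[I] = \tr(\id[R]) \bullet \id[I] = \dim(R)$. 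Applying Lemma~\ref{lem:endocollapse} once more, now to the endomorphism $\id[R]$, gives $\id[R] = \tr(\id[R]) \bullet \id[R] = \dim(R) \bullet \id[R]$, so up to unitors $(rr^\dag) \otimes \id[R] = \dim(R) \bullet \id[R] = \id[R]$. Hence $r \otimes \id[R]$ is invertible and $r$ is a subunit.

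For (c), I would set $\varphi(r) = rr^\dag$; this is well defined on subobjects since an isomorphism of subobjects is invertible, hence unitary in an inverse category, and so leaves $rr^\dag$ unchanged. For injectivity, if $rr^\dag = r'r'^\dag$ then $u = r'^\dag r$ satisfies $r' u = (r'r'^\dag) r = (rr^\dag) r = r$ and, using $r^\dag r = \id[R] = r'^\dag r'$ from (a), $u^\dag u = \id[R]$ and $uu^\dag = \id[R']$, so $r = r'u$ with $u$ unitary exhibits $r$ and $r'$ as isomorphic subobjects. For surjectivity, any $s \in \cat{C}_0$ is a dagger idempotent, so since idempotents split it admits a splitting with $r^\dag r = \id[R]$ and $rr^\dag = s$, and by (a) this $r$ is a subunit mapping to $s$. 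Finally, rather than computing meets I would note that $\varphi$ is an order isomorphism: if $r \leq r'$, say $r = r'm$, then $(r'r'^\dag)(rr^\dag) = rr^\dag$, i.e.\ $rr^\dag \wedge r'r'^\dag = rr^\dag$; conversely if $(r'r'^\dag)(rr^\dag) = rr^\dag$ then $m = r'^\dag r$ gives $r'm = (r'r'^\dag) r = (rr^\dag) r = r$. Since a bijective order isomorphism of meet-semilattices automatically preserves $\wedge$ and $\top$, this makes $\varphi$ a semilattice isomorphism $\mathrm{ISub}(\cat{C}) \cong \cat{C}_0$.

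The main obstacle is the converse in (a): the candidate inverse $r^\dag \otimes \id[R]$ does not return the identity on the nose, and one must recognise that the residual scalar $rr^\dag = \dim(R)$ already acts as the identity on $R$. This is precisely where compactness is essential — it fails in a general inverse category — and the double use of Lemma~\ref{lem:endocollapse}, once to evaluate $rr^\dag$ and once to see $\dim(R) \bullet \id[R] = \id[R]$, is the crucial step. A secondary technical point is the dagger splitting of the self-adjoint idempotent $s$ in the surjectivity argument, for which I would invoke the standard fact that dagger idempotents split through partial isometries whenever idempotents split.
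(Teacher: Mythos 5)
Your proof is correct, and in two places it takes a genuinely different route from the paper's, which is worth comparing. The forward direction of (a) and part (b) coincide with the paper's argument. For the converse of (a), however, the paper does not use compactness at all: it verifies $(r\otimes\id[R])\circ(r^\dag\otimes\id[R])=(rr^\dag)\otimes(r^\dag r)=\id[I\otimes R]$ by sliding the scalar $rr^\dag$ across the tensor, $\id[I]\otimes\bigl(r^\dag(rr^\dag)r\bigr)=(rr^\dag)\otimes(r^\dag r)$, using only the axiom $r=rr^\dag r$, the isometry equation, and monoidal coherence for scalars. Your route via Lemma~\ref{lem:endocollapse} and cyclicity of the trace (first $rr^\dag=\tr(r^\dag r)\bullet\id[I]=\dim(R)$, then $\dim(R)\bullet\id[R]=\id[R]$) is valid but heavier, and your concluding remark that ``this is precisely where compactness is essential'' is inaccurate: part (a) holds in any monoidal inverse category, as the paper's computation shows. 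For (c) the paper outsources essentially everything to \cite[Lemma~2.25]{cockettlack:restriction} and \cite[Proposition~2.8]{enriquemolinerheunentull:tensortopology}, whereas you prove well-definedness (isomorphisms in an inverse category are unitary), injectivity (via the unitary $u=r'^\dag r$), and monotonicity in both directions by hand; this buys a self-contained argument at the cost of length, and your order-isomorphism trick neatly avoids computing meets. The one gap-shaped spot is your appeal to the ``standard fact'' that self-adjoint idempotents split through isometries whenever idempotents split: this is \emph{not} automatic in a general dagger category, but it does hold in inverse categories --- it is essentially the content of the Cockett--Lack lemma the paper cites --- and it can be filled in one line in your setting: given any splitting $s=me$, $em=\id[R]$ of $s\in\cat{C}_0$, the map $m$ is split monic, hence an isometry by your own cancellation argument from (a), and then $s\circ(mm^\dag)=m(em)m^\dag=mm^\dag$ and $(mm^\dag)\circ s=m(m^\dag m)e=s$, so $mm^\dag=s$ because the two scalars commute. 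With that line added, your proof of (c) is complete and fully self-contained.
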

\begin{proof}
  For (a), first notice that if $r \colon R \to I$ is monic, then because $r=rr^\dag r$ in fact $r$ is an isometry, that is, $r^\dag r = \id$. We will show that for isometries $r$, the condition that $r \otimes \id[R]$ is invertible holds automatically, with the inverse being $r^\dag \otimes \id[R]$. It suffices to show that $(r \otimes \id[R])(r^\dag \otimes \id[R])=\id[I \otimes R]$. But 
  \[
    \id[I \otimes R] = \id[I] \otimes (r^\dag r) = \id[I] (r^\dag (r r^\dag) r) = (r r^\dag) \otimes (r^\dag r) = (r r^\dag) \otimes \id[R]\text.
  \]
  Thus the subunits are precisely the (subobjects represented by) isometries.

  Part (b) is obvious: if $r$ is an isometry, then $s=rr^\dag \colon I \to I$ satisfies $s=ss^\dag$.

  Part (c) follows from~\cite[Lemma~2.25]{cockettlack:restriction}, as does the fact that the maps of (b) and (c) are each other's inverses. It is easy to see that both maps preserve the order structure using~\cite[Proposition~2.8]{enriquemolinerheunentull:tensortopology}.
\end{proof}

Now there are two ways to `localise' $\cat{C}$ to $r \in \mathrm{ISub}(\cat{C})$. 
The localisation $\cat{C}\big|_r$ according to~\cite{enriquemolinerheunentull:tensortopology} has objects $A$ such that $r \otimes \id[A]$ is invertible, and all morphisms between those objects.
The localisation $\cat{C}_{rr^\dag}$ above has all objects, but only those morphisms $f$ satisfying $\tr(ff^\dag)=rr^\dag$.
These two localisations are different. The former localises with respect to the tensor product, whereas the latter localises with respect to composition.

Generally, taking semilattices of categories is a completion procedure. Does it generalise to (weak) 2-categories? If so, the above may be the special cases of a single object and of unique 2-cells, and could form a higher-categorical analogue of the Eckmann-Hilton argument in the Baez-Dolan stabilisation hypothesis~\cite{baezdolan}. Is there a relationship with~\cite{hayashi:semifunctors}?

\subsection{Internal descriptions}

Groupoids are precisely special dagger Frobenius algebras in the category $\cat{Rel}$ of sets and relations~\cite{heunencontrerascattaneo:groupoids}. 
Compact groupoids are precisely special dagger Frobenius algebras in the category $\cat{Rel}(\cat{Gp})$ of relations over the regular category of groups, see~\cite{granheunentull:connectors}.
Can inverse categories similarly be described as certain monoids in a category of relations?

\subsection{Bratteli diagrams and C*-algebras}

Describing compact inverse categories through a diagram of groupoids resembles describing an AF C*-algebra as a diagram of finite-dimensional C*-algebras~\cite{aminielliotgolestani:bratteli}. It is very fruitful to work with this so-called Bratteli diagram directly rather than with the C*-algebra itself. 
More generally, inverse semigroups are a popular way to generate C*-algebras~\cite{duncanpaterson:cstaralgebras}, as it is easier to work with the inverse semigroup directly, and moreover this captures many important classes of C*-algebras~(see \textit{e.g.}~\cite{starling:booleaninversemonoids}): AF C*-algebras, graph C*-algebras, tiling C*-algebras, self-similar group C*-algebras, subshift C*-algebras, C*-algebras of ample {\'e}tale groupoids, and C*-algebras of Boolean dynamical systems. There is also a multiply-typed version building a C*-algebra from a so-called higher rank graph~\cite{kumjianpask:higherrankgraph}. Can one similarly generate a C*-algebra from a compact inverse category, and is there a relationship to these other constructions?
A first step might be to extend~\cite{linckelmann:transfer} to possibly infinite categories by adding a norm.

\bibliographystyle{plain}
\bibliography{cptinv}

\end{document}